\newtheorem{theorem}{Theorem}[section]
\newtheorem{lemma}[theorem]{Lemma}
\newtheorem{corollary}[theorem]{Corollary}
\theoremstyle{definition}
\theoremstyle{remark}
\newtheorem{remark}[theorem]{Remark}
\numberwithin{equation}{section}
\def\D{\mathcal{D}}
\def\F{\mathcal{F}}
\def\P{\mathbb{P}}
\def\E{\mathbb{E}}
\def\({\left(}
\def\){\right)}
\def\<{(}
\def\>{)}
\begin{document}

\title[]{Strong convergence of a fully discrete finite element method for a class of semilinear stochastic partial differential equations with multiplicative noise}

\author{Xiaobing Feng}
\address{Department of Mathematics, The University of Tennessee, Knoxville, TN 37996, U.S.A. }
\thanks{The work of the first author was partially supported by the NSF grant: DMS-1318486 }
\email{xfeng@math.utk.edu}
\author{Yukun Li}
\address{Department of Mathematics, The Ohio State University, Columbus, OH 43210, U.S.A.}
\email{li.7907@osu.edu}
\author{Yi Zhang}
\address{Department of Mathematics and Statistics, The University of North Carolina at Greensboro, Greensboro, NC 27402, U.S.A.}
\email{y\_zhang7@uncg.edu}
 
\subjclass[2010]{Primary 60H35, 65N12, 65N15, 65N30}


\date{}

\dedicatory{}

\begin{abstract}
This paper develops and analyzes a fully discrete finite element method for a class of semilinear stochastic 
partial differential equations (SPDEs) with 
multiplicative noise. The nonlinearity in the diffusion term of the SPDEs is assumed to be globally 
Lipschitz and the nonlinearity in the drift term is only assumed to satisfy a one-side Lipschitz condition. 
These assumptions are the same ones as used in \cite{higham2002strong} where numerical methods 
for general nonlinear stochastic ordinary 
differential equations (SODEs) under ``minimum assumptions'' were studied. As a result, the semilinear SPDEs 
considered in this paper is a direct generalization of the SODEs considered in \cite{higham2002strong}.  
There are several difficulties which need to be overcome for this generalization.  First, obviously the spatial  discretization, which does not appear in the SODE case, adds an extra layer of difficulty. It turns out a special 
discretization must be designed to guarantee certain properties for the numerical scheme and its stiffness 
matrix. In this paper we use a finite element interpolation technique to discretize the nonlinear drift 
term. Second, in order to prove the strong convergence of the proposed fully discrete finite element 
method,  stability estimates for higher order moments of the $H^1$-seminorm of the numerical solution must be
established, which are difficult and delicate. A judicious combination of the properties of the drift 
and diffusion terms and a nontrivial technique borrowed from \cite{majee2018optimal} is used in this paper to 
achieve the goal. Finally, stability estimates for the second and higher order moments of
the $L^2$-norm of the numerical solution is also difficult to obtain due to the fact that the 
mass matrix may not be diagonally dominant. This is done by utilizing the interpolation theory 
and the higher moment estimates for the $H^1$-seminorm of the numerical solution. 
After overcoming these difficulties, it is proved that the proposed fully discrete finite element method
is convergent in strong norms with nearly optimal rates of convergence.  Numerical experiment results 
are also presented to validate the theoretical results and to demonstrate the efficiency of the 
proposed numerical method. 
\end{abstract}

\maketitle


\section{Introduction}\label{sec-1}
We consider the following initial-boundary value problem for general semilinear stochastic partial 
differential equations (SPDEs) with function-type multiplicative noise:
\begin{alignat}{2}\label{sac_s}
d u &= \bigl[ \Delta u + f(u) \bigr] \, dt + g(u)\, d W(t), &&\qquad\mbox{in } \D \times (0,T),\\
\frac{\partial u}{\partial \nu} &=0,  &&\qquad\mbox{on } \partial \D\times (0,T), \label{sac_s2} \\
u(\cdot, 0) &= u_0(\cdot),  &&\qquad\mbox{in } \D. \label{sac_s3}
\end{alignat}
Here $\D\subset \mathbf{R}^d (d=1,2,3)$ is a bounded domain, $W: \Omega\times (0,T)\to \mathbf{R}$ 
denotes the standard Weiner process on the filtered probability space $(\Omega,\F,\{\F_t : t\ge0\}, \P)$, and $f,g\in C^1$ are two given functions and $f(u)$ takes the form
\begin{align}\label{eq20180812_1} 
f(u)=c_0u-c_1u^3-c_2u^5-c_3u^7-\cdots,
\end{align}
where $c_i\ge 0, i=0,1,2,\cdots$. For the sake of clarity, we only consider the case $f(u)=u-u^q$ 
in this paper, where $q\ge3$ is an odd integer (it is trivial when $f(u) = c_0u$). We remark that similar results still hold for the 
general nonlinear function $f(u)$ in \eqref{eq20180812_1}, and when $f(u)=\frac{1}{\epsilon^2}(u-u^3)$, 
\eqref{sac_s} is known as the stochastic Allen-Cahn equation with function-type multiplicative noise and 
interaction length $\epsilon$ \cite{majee2018optimal}. We also assume that $g$ is 
globally Lipschitz, that is, there exists a constant $\kappa_1>0$ such that
\begin{align}\label{eq20180812_2}
|g(a)-g(b)|\le \kappa_1|a-b|.
\end{align}
Setting $b=0$ in \eqref{eq20180812_2}, we get
\begin{align}\label{eq20180813_1}
|g(a)|^2&\le C+Ca^2,\\
|g(a)\,a|&\le C+Ca^2.\label{eq20180813_2}
\end{align}

Under the above assumptions for the drift term and the diffusion term, it can be proved that 
 \cite{gess2012strong} there exists a unique strong variational solution u
such that
\begin{align}\label{var_solu}
(u(t),\phi) &= (u(0),\phi) - \int_0^t \Bigl(\nabla u(s), \nabla \phi \Bigr) \, ds +\int_0^t  \big( f(u(s)),\phi \big) \, ds\\
   &\quad   
+ \int_0^t (g(u),\phi) \, d W(s) \qquad \forall \, \phi \in H^1(\D) \notag 
\end{align}
holds $\P$-almost surely. Moreover, when the initial condition $u_0$ is sufficiently smooth, the following stability estimate for the strong solution $u$ holds: 
\begin{align}\label{eq20190919_1}
\sup_{t\in[0,T]}\E \left[ \|\Delta u(t)\|_{L^2}^2 \right] + \sup_{t\in[0,T]} \E \left[ \|u(t)\|^{2q}_{L^{2q}} \right]
+\sup_{t\in[0,T]}\E \left[ \|u(t)\|_{L^{\infty}}^{2q-2} \right]\le C.
\end{align}

Clearly, when the $\Delta u$ term in \eqref{sac_s} is dropped, the PDE reduces to a stochastic ODE.  
A convergence theory for numerical approximations for this stochastic ODE was established long ago 
(cf. \cite{mao2007stochastic, kloeden1991numerical}) under the global Lipschitz assumptions on $f$ and $g$. 
Later, the convergence was proved in \cite{higham2002strong} under a weaker condition on $f$ known as 
a one-side Lipschitz condition in the sense that there exists a constant $\mu>0$ such that
\begin{align}\label{eq20180918_1}
(a-b,f(a)-f(b))\le\mu(a-b)^2\qquad\forall a,b\in\mathbb{R}.
\end{align}
The optimal rate of convergence was also obtained in \cite{higham2002strong}
under an extra assumption that $f$ behaves like a polynomial. The one-side Lipschitz condition is widely used and it has broad applications \cite{burrage1979stability, butcher1975stability, dahlquist1976error, dekker1984stability, stuart1998dynamical}.

We also note that numerical approximations of the SPDE \eqref{sac_s} with various special drift terms 
and/or diffusion terms have been extensively investigated in the literature, see \cite{feng2014finite, feng2017finite, majee2018optimal, prohl2014strong}. In particular, we mention that the case that 
$f(u)=u-u^3$, $g(u), g'(u), 
g''(u)$ are bounded and $g(u)$ is global Lipschitz continuous was studied in \cite{majee2018optimal}, 
the high moments of the $H^1$-norm of the numerical solution were proved to be stable, and 
a nearly optimal strong convergence rate was established. A specially designed discretization is used for $f(u)=u-u^3$, and it is not trivial to extend the idea to the case when $f(u)=u-u^q$ where $q>3$.  

The goal of this paper is to generalize the numerical SODE theory of \cite{higham2002strong}
to the SPDE case. Specifically,
we want to design a fully discrete finite element method for problem \eqref{sac_s}--\eqref{sac_s3}
which can be proved to be stable and convergent with optimal rates in strong norms 
under ``minimum'' assumptions on nonlinear functions $f$ and $g$ as those used 
in \cite{higham2002strong}. We recall that
the ``minimum'' assumptions refer to that $g$ is assumed to be global Lipschitz,  and 
$f$ satisfies the one-side Lipschitz condition \eqref{eq20180918_1} and it behaves like a polynomial.
To the best of our knowledge, such a goal has yet been achieved  before in the literature. 

The remainder of this paper is organized as follows. In Section \ref{sec-2}, we establish several 
 H\"older continuity properties (in different norms) for the SPDE solution $u$ and for the composite function
$f(u)$. These properties play an important role in our error analysis. In Section \ref{sec-3}, we first 
present our fully discrete finite element method for problem \eqref{sac_s}--\eqref{sac_s3}, which
consists of an Euler-type scheme for time discretization and a nonstandard finite element method 
for spatial discretization. The novelty of our spatial discretization is to approximate the nonlinear function $f$ 
by its  finite element interpolation in the scheme. We then establish several key properties for the 
numerical solution,  among them are the stability of the second and higher order moments of its $H^1$-seminorm 
and the stability of the second and higher order moments of its $L^2$-norm. We note that the proofs of 
the stability of these higher order moments are quite involved, and they require some special techniques and 
rely on the structure of the proposed numerical method. For example, the diagonal dominance property of 
the stiffness matrix is needed to show the stability of the second and higher order moments of the $H^1$-seminorm
of the numerical solution, however, the mass matrix may not be diagonally dominant. To circumvent this difficulty,
we use the stability of the second and higher order moments of the $H^1$-seminorm of the numerical solution 
and the interpolation theory to get the desired $L^2$-norm stability. 
Finally, in this section we prove nearly optimal order error estimates for the numerical solution 
by utilizing the stability of
higher order moments of the $L^2$-norm and $H^1$-seminorm of the numerical solution. We like to emphasize that
only sub-optimal order error estimates could be obtained should the stability of higher order moments 
of the $H^1$-seminorm of the numerical solution were not known, see \cite{prohl2014strong} where the special case
$f(u)=u-u^3$ was considered.
In Section \ref{sec-4}, we present several numerical experiments to validate our theoretical results,
especially to verify the stability of numerical solution using different initial conditions $u_0$ and 
different functions $f$ and $g$. As a special case, the stochastic Allen-Cahn equation with 
function-type multiplicative noise is also tested.

\section{Preliminaries and properties of the SPDE solution}\label{sec-2} 
Throughout this paper, we shall use $C$ to denote a generic constant, and we take the standard Sobolev 
notations in \cite{BS2008}. When it is the whole domain $\D$, $\|\cdot\|_{H^k}$ and $\|\cdot\|_{L^p}$ 
are used to simplify $\|\cdot\|_{H^k(\D)}$ and $\|\cdot\|_{L^p(\D)}$ respectively, and $(\cdot\ ,\ \cdot)$ 
is used to denote the standard inner product of $L^2(\D)$. $\E[\cdot]$ denotes the 
expectation operator on the filtered probability space $(\Omega,\F,\{\F_t : t\ge0\}, \P)$. 

In this section, we first derive the H\"older continuity in time for the strong solution $u$ with respect 
to the spatial $H^1$-seminorm and for the composite function $f(u)$ with respect to the spatial $L^2$-norm. 
Both results will play a key role in the error analysis (see Subsection \ref{subsec3}). 
The time derivatives of $\nabla u$ and the composite function $f(u)$ do not exist in the stochastic case, 
so these H\"older continuity results will substitute for the differentiability of $\nabla u$ and $f(u)$ 
with respect to time in the error analysis.

\begin{lemma}\label{lem:e3}
Let $u$ be the strong solution to problem \eqref{var_solu}.
Then for any $s,t \in [0,T]$ with $s < t$, we have
\begin{align*}
\E \big[ \|\nabla (u(t)-u(s)) \|_{L^2}^2 \big] + \frac{1}{2} \E\left[ \int_{s}^t \|\Delta(u(\zeta)-u(s))\|_{L^2}^2 \, d\zeta \right] \leq C_1 (t-s),
\end{align*}
where
\begin{align*}
C_1= C\bigg(\sup_{s \leq \zeta \leq t} 
\E \left[ \|\Delta u(\zeta)\|_{L^2}^2 \right] 
+ \sup_{s \leq \zeta \leq t} \E \left[ \|u(\zeta)\|^{2q}_{L^{2q}} \right]+ \sup_{s \leq \zeta \leq t} \E \left[ \|u(\zeta)\|_{L^2}^2 \right]\bigg).
\end{align*}
\end{lemma}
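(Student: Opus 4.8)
The plan is to apply Itô's formula to the squared $H^1$-seminorm of the increment $w(\zeta):=u(\zeta)-u(s)$, regarded as a process in $\zeta\in[s,t]$ with $s$ fixed, and then to extract the dissipative $\|\Delta w\|_{L^2}^2$ term by integration by parts. Since $w(s)=0$ and $dw=[\Delta u+f(u)]\,d\zeta+g(u)\,dW$, Itô's formula applied to $\zeta\mapsto\frac12\|\nabla w(\zeta)\|_{L^2}^2$ and integration from $s$ to $t$ give
\begin{align*}
\frac12\|\nabla w(t)\|_{L^2}^2 = \int_s^t \bigl(\nabla w, \nabla(\Delta u + f(u))\bigr)\,d\zeta + \int_s^t \bigl(\nabla w, \nabla g(u)\bigr)\,dW + \frac12\int_s^t \|\nabla g(u)\|_{L^2}^2\,d\zeta,
\end{align*}
the last term being the Itô correction. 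The regularity needed to justify this formula, and to ensure the stochastic integral is a genuine martingale after localization, is supplied by the stability estimate \eqref{eq20190919_1}.

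The crucial step is to rewrite the drift integrand. Using the Neumann boundary condition satisfied by $w$ (inherited from $u$ and $u(s)$), integration by parts gives $(\nabla w,\nabla(\Delta u+f(u)))=-(\Delta w,\Delta u+f(u))$; writing $\Delta u=\Delta w+\Delta u(s)$ this becomes
\begin{align*}
-(\Delta w, \Delta u + f(u)) = -\|\Delta w\|_{L^2}^2 - (\Delta w, \Delta u(s)) - (\Delta w, f(u)),
\end{align*}
which produces the negative term $-\|\Delta w\|_{L^2}^2$ that I move to the left. After taking expectations (killing the martingale) I obtain $\frac12\E[\|\nabla w(t)\|_{L^2}^2]+\E[\int_s^t\|\Delta w\|_{L^2}^2\,d\zeta]$ on the left and the two cross terms together with the Itô correction on the right.

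It remains to bound the right-hand side. For the cross terms I apply Young's inequality, e.g. $|(\Delta w,\Delta u(s))|\le\frac14\|\Delta w\|_{L^2}^2+\|\Delta u(s)\|_{L^2}^2$ and similarly for $(\Delta w,f(u))$, absorbing the $\|\Delta w\|_{L^2}^2$ contributions into the left-hand side; the leftover pieces are controlled by $\sup_{\zeta}\E[\|\Delta u(\zeta)\|_{L^2}^2]$ and, via $\|f(u)\|_{L^2}^2\le 2\|u\|_{L^2}^2+2\|u\|_{L^{2q}}^{2q}$, by the $L^2$- and $L^{2q}$-moments appearing in $C_1$. For the Itô correction I use that $g$ is globally Lipschitz, so $|g'|\le\kappa_1$ and $\|\nabla g(u)\|_{L^2}^2=\int_{\D}|g'(u)|^2|\nabla u|^2\le\kappa_1^2\|\nabla u\|_{L^2}^2$; the elliptic bound $\|\nabla u\|_{L^2}^2=-(\Delta u,u)\le\frac12\|\Delta u\|_{L^2}^2+\frac12\|u\|_{L^2}^2$ then reduces this to the quantities in $C_1$. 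Each surviving term is bounded by $C(t-s)$ times the relevant supremum, since the integrands are controlled uniformly in $\zeta\in[s,t]$; multiplying through by $2$ and discarding half of the (nonnegative) $\Delta w$-term yields the stated estimate, the coefficient $\frac12$ in the lemma coming for free.

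I expect the main obstacle to be the rigorous justification of the infinite-dimensional Itô formula together with the integration by parts that generates $-\|\Delta w\|_{L^2}^2$: one must verify that the solution has enough spatial regularity (guaranteed by \eqref{eq20190919_1}) for $\Delta w\in L^2$ to make sense along trajectories, and that the stochastic integral $\int_s^t(\nabla w,\nabla g(u))\,dW$ is sufficiently integrable to have vanishing expectation, which typically requires a stopping-time localization followed by a limit passage using the uniform moment bounds.
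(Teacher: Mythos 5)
Your proposal is correct and follows essentially the same route as the paper: It\^{o}'s formula applied to $\|\nabla(u(\cdot)-u(s))\|_{L^2}^2$ with $s$ fixed, integration by parts to produce the dissipative $-\|\Delta(u(\zeta)-u(s))\|_{L^2}^2$ term, Young's inequality on the cross terms with $\Delta u(s)$ and $f(u)$, the bound $\|f(u)\|_{L^2}^2\le C(\|u\|_{L^2}^2+\|u\|_{L^{2q}}^{2q})$, and the Lipschitz bound $\|\nabla g(u)\|_{L^2}^2\le \kappa_1^2\|\nabla u\|_{L^2}^2$ for the It\^{o} correction. The only (immaterial) difference is bookkeeping: you absorb the $\|\Delta(u(\zeta)-u(s))\|_{L^2}^2$ pieces of the cross terms into the left-hand side to land exactly on the coefficient $\tfrac12$, whereas the paper bounds the piece coming from the $f$-term directly by $\sup_{\zeta}\E\left[\|\Delta u(\zeta)\|_{L^2}^2\right]$.
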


\begin{proof}
Applying It\^{o}'s formula to the functional $\Phi(u(\cdot)) :=\|\nabla u(\cdot) 
- \nabla u(s)\|_{L^2}^2$ with fixed $s \in [0,T)$ and using integration by parts, we get
\begin{align}
\label{e3:1}
&\|\nabla u(t)-\nabla u(s)\|_{L^2}^2 
= -2 \int_{s}^t (\Delta u(\zeta)-\Delta u(s),\Delta u(\zeta) ) \, d\zeta \\
&\quad 
- 2\int_s^t \Bigl( \Delta u(\zeta) - \Delta u(s), f(u(\zeta)) \Bigr) \, d\zeta \notag \\
&\quad
-2\int_{s}^t ( \Delta u(\zeta) - \Delta u(s), g(u(\zeta)) ) \, d W(\zeta)
+\int_{s}^t \|\nabla g(u(\zeta))\|^2 \, d \zeta. \notag
\end{align}

The expectation of the first term on the right-hand side of \eqref{e3:1} can be bounded by the Cauchy-Schwarz inequality as follows
\begin{align} \label{e3:2}
&-2 \E \left[ \int_{s}^t (\Delta u(\zeta)-\Delta u(s),\Delta u(\zeta) ) \, d \zeta \right] \\
&\quad = -2 \E \left[ \int_s^t \|\Delta u(\zeta)-\Delta u(s) \|^2_{L^2} \, d\zeta 
+ \int_{s}^t (\Delta u(\zeta)-\Delta u(s),\Delta u(s) ) \, d \zeta \right] \notag \\
&\quad \leq - \E \left[ \int_s^t \|\Delta u(\zeta)-\Delta u(s)\|^2_{L^2} \, d\zeta \right] 
+ \E \left[  \|\Delta u(s)\|^2_{L^2}\right] (t-s). \notag
\end{align}
The expectation of the second term on the right-hand side of \eqref{e3:1} can be bounded by
\begin{align} \label{e3:3}
2\E &\left[ \int_s^t \Bigl( \Delta u(\zeta)-\Delta u(s), f(u(\zeta)) \Bigr) \, d\zeta \right] \\
&\quad 
\leq \E \left[ \int_s^t \left( \| \Delta u(\zeta)-\Delta u(s) \|_{L^2}^2 
+ \| f(u(\zeta)) \|_{L^2}^2 \right) \, d \zeta \right] \notag \\
&\quad 
\leq C \Bigl( \sup_{s \leq \zeta \leq t} 
\E \left[ \|\Delta u(\zeta)\|_{L^2}^2 \right] 
+ \sup_{s \leq \zeta \leq t} \E \left[ \|u(\zeta)\|^{2q}_{L^{2q}} \right]\notag\\
&\qquad+ \sup_{s \leq \zeta \leq t} \E \left[ \|u(\zeta)\|_{L^2}^2 \right] \Bigr) (t-s) \notag.
\end{align}

Next we bound the expectation of the fourth term on the right-hand side of \eqref{e3:1} as follows
\begin{align}\label{e3:5}
\E \left[\int_s^t \|\nabla g(u(\zeta))\|^2 d\zeta \right] 
\leq  C\sup_{s \leq \zeta \leq t} \E \left[ \| \nabla u(\zeta)\|^2_{L^2} \right] (t-s).
\end{align}
Then Lemma \ref{lem:e3} follows from \eqref{e3:1}--\eqref{e3:5} and the fact that the expectation of the third term on the 
right-hand side of \eqref{e3:1} is zero.
\end{proof}

Next we prove the H\"{o}lder continuity result for the nonlinear term $f(u(t))-f(u(s))$ with respect to the spatial $L^2$-norm.

\begin{lemma}\label{lem:e4}
Let $u$ be the strong solution to problem \eqref{var_solu}. Then for any $s,t \in [0,T]$ with $s < t$, we have
\begin{align*}
\E \big[ \|f(u(t))-f(u(s))\|_{L^2}^2 \big]\leq C_2 (t-s),
\end{align*}
where 
\begin{align*}
&C_2 = \Bigl( C+\sup_{s \leq \zeta \leq t} 
\E \left[\|\Delta u(\zeta)\|_{L^2}^2 \right]+ \sup_{s \leq \zeta \leq t} 
\E \left[\|u(\zeta)\|_{L^2}^2 \right]
+ \sup_{s \leq \zeta \leq t} \E \left[ \|u(\zeta)\|_{L^{2q}}^{2q} \right]\notag\\
&+ \sup_{s \leq \zeta \leq t} \E \left[ \|u(\zeta)\|_{L^{4}}^{4} \right]\Bigr)\times\Bigl(C+\sup_{s \leq \zeta \leq t} 
\E \left[ \|u(\zeta)\|_{L^{\infty}}^{q-2} \right]+\sup_{s \leq \zeta \leq t} 
\E \left[ \|u(\zeta)\|_{L^{\infty}}^{2q-2} \right]\Bigr).
\end{align*}
\end{lemma}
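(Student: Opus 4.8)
The plan is to isolate the linear and the polynomial parts of $f$ and treat them separately. Writing $f(u)=u-u^{q}$, we have $f(u(t))-f(u(s)) = \bigl(u(t)-u(s)\bigr) - \bigl(u(t)^{q}-u(s)^{q}\bigr)$, so that $\|f(u(t))-f(u(s))\|_{L^{2}}^{2} \le 2\|u(t)-u(s)\|_{L^{2}}^{2} + 2\|u(t)^{q}-u(s)^{q}\|_{L^{2}}^{2}$. It then suffices to bound the expectation of each piece by $C_{2}(t-s)$.

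For the linear piece I would return to the variational identity \eqref{var_solu}; since $u$ is a strong solution with $\Delta u\in L^{2}$ by \eqref{eq20190919_1}, one obtains the $L^{2}$-valued representation $u(t)-u(s)=\int_{s}^{t}\bigl(\Delta u(\zeta)+f(u(\zeta))\bigr)\,d\zeta+\int_{s}^{t} g(u(\zeta))\,dW(\zeta)$. Applying $\|a+b\|^{2}\le 2\|a\|^{2}+2\|b\|^{2}$, the Cauchy--Schwarz inequality in time to the drift integral, and the It\^o isometry to the stochastic integral, then taking expectations, yields $\E\|u(t)-u(s)\|_{L^{2}}^{2} \le 2(t-s)\,\E\!\int_{s}^{t}\|\Delta u+f(u)\|_{L^{2}}^{2}\,d\zeta + 2\,\E\!\int_{s}^{t}\|g(u)\|_{L^{2}}^{2}\,d\zeta$. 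Using $\|f(u)\|_{L^{2}}^{2}\le C\|u\|_{L^{2}}^{2}+C\|u\|_{L^{2q}}^{2q}$ and $\|g(u)\|_{L^{2}}^{2}\le C+C\|u\|_{L^{2}}^{2}$ from \eqref{eq20180813_1}, this is bounded by $(t-s)$ times the terms $C$, $\sup\E\|\Delta u\|_{L^{2}}^{2}$, $\sup\E\|u\|_{L^{2}}^{2}$, $\sup\E\|u\|_{L^{2q}}^{2q}$ appearing in the first factor of $C_{2}$.

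The polynomial piece is the heart of the matter. I would first use the algebraic factorization $u(t)^{q}-u(s)^{q}=\bigl(u(t)-u(s)\bigr)\sum_{j=0}^{q-1}u(t)^{j}u(s)^{q-1-j}$ together with the pointwise bound $\bigl|\sum_{j=0}^{q-1}u(t)^{j}u(s)^{q-1-j}\bigr|\le C\bigl(|u(t)|^{q-1}+|u(s)|^{q-1}\bigr)$, giving $|u(t)^{q}-u(s)^{q}|^{2}\le C\bigl(|u(t)|^{2q-2}+|u(s)|^{2q-2}\bigr)|u(t)-u(s)|^{2}$. For a representative term $\int_{\D}|u(t)|^{2q-2}|u(t)-u(s)|^{2}\,dx$, I would split the exponent as $2q-2=(q-2)+q$ and apply H\"older's inequality in space to separate the high power of $u(t)$ from the increment, e.g. $\int_{\D}|u(t)|^{q-2}\,|u(t)|^{q}\,|u(t)-u(s)|^{2}\,dx\le \|u(t)\|_{L^{\infty}}^{q-2}\,\|u(t)\|_{L^{2q}}^{q}\,\|u(t)-u(s)\|_{L^{4}}^{2}$. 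The increment factor $\|u(t)-u(s)\|_{L^{4}}^{2}$ would then be controlled by interpolation (Gagliardo--Nirenberg) against the $L^{2}$- and $H^{1}$-increments, the latter supplying the $(t-s)$ rate through Lemma \ref{lem:e3}, while the high-moment factors $\|u(t)\|_{L^{\infty}}^{q-2}$ and $\|u(t)\|_{L^{2q}}^{q}$ are kept finite by \eqref{eq20190919_1}; a H\"older inequality in $\O$ then distributes these into the two factors of $C_{2}$. The appearance of both $\|u\|_{L^{\infty}}^{q-2}$ and $\|u\|_{L^{\infty}}^{2q-2}$, together with the $L^{4}$-moment, reflects that several admissible groupings of the $2q-2$ power are collected and then bounded from above by the single product (first factor)$\times$(second factor) via $\sum_{k}\alpha_{k}\beta_{k}\le(\sum_{k}\alpha_{k})(\sum_{k}\beta_{k})$ for nonnegative terms.

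The main obstacle is precisely this exponent bookkeeping in the polynomial piece: because $f$ grows super-linearly and is only one-sided Lipschitz, every estimate of the increment $u(t)^{q}-u(s)^{q}$ couples a factor that must carry the full H\"older rate $(t-s)$ with factors involving high powers of $u$. The delicate point is to distribute the $2q-2$ power across the $L^{\infty}$, $L^{2q}$ and $L^{4}$ norms in space, and then across the sample space via H\"older in $\O$, so that exactly one factor is an increment controllable by Lemma \ref{lem:e3} (yielding $(t-s)$) while all remaining high moments stay finite by \eqref{eq20190919_1}. Since Lemma \ref{lem:e3} only furnishes control of the $H^{1}$-seminorm (and the $\Delta$-norm) of the increment, an interpolation step is unavoidable to reach the $L^{4}$ or $L^{2}$ norm of the increment that naturally arises from the spatial H\"older splitting, and balancing this interpolation against the admissible moment bounds is where the real work lies.
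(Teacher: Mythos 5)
Your decomposition into a linear increment plus a polynomial increment, followed by the algebraic factorization of $u(t)^q-u(s)^q$ and a spatial H\"{o}lder splitting, is a genuinely different route from the paper's: the paper applies It\^{o}'s formula directly to $\Phi(u(\cdot))=\|f(u(\cdot))-f(u(s))\|_{L^2}^2$, so that after taking expectations the stochastic integral vanishes and every remaining term is a single-time quantity integrated over $[s,t]$ --- the factor $(t-s)$ falls out of the time integral and only the fixed moments in \eqref{eq20190919_1} are needed, with the increment term handled by Young's inequality. Your treatment of the linear piece is correct. The polynomial piece, however, has a genuine gap at precisely the point you defer to ``where the real work lies'': you must bound
\begin{align*}
\E\Bigl[\|u(t)\|_{L^{\infty}}^{q-2}\,\|u(t)\|_{L^{2q}}^{q}\,\|u(t)-u(s)\|_{L^{4}}^{2}\Bigr]\leq C(t-s),
\end{align*}
and the only available inputs are the \emph{second-moment} increment estimates ($\E[\|\nabla(u(t)-u(s))\|_{L^2}^2]\le C(t-s)$ from Lemma \ref{lem:e3}, and the analogous $L^2$ bound from your linear step) together with the fixed moment bounds \eqref{eq20190919_1}. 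Any H\"{o}lder inequality over the sample space that preserves the full rate $(t-s)$ must keep the squared increment in $L^1(\Omega)$, which forces the conjugate exponent on the moment factor to be $\infty$, i.e.\ it requires $\|u(t)\|_{L^{\infty}}^{q-2}\|u(t)\|_{L^{2q}}^{q}$ to be bounded almost surely --- not available. Conversely, assigning the moment factor a finite exponent raises the increment to a power $2p$ with $p>1$ and requires $\E[\|u(t)-u(s)\|_{H^1}^{2p}]\le C(t-s)^{p}$, a higher-moment H\"{o}lder continuity estimate that Lemma \ref{lem:e3} does not supply and that neither the paper nor your proposal establishes (it would need a Burkholder--Davis--Gundy argument). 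Without one of these two missing ingredients the H\"{o}lder-in-$\Omega$ step necessarily degrades the rate below $(t-s)$, so the argument as outlined does not close; this coupling of increments with unbounded moments is exactly what the paper's It\^{o}-formula approach is designed to avoid.
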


\begin{proof}
Applying It\^{o}'s formula to $\Phi(u(\cdot)) 
:= \|f(u(\cdot))-f(u(s))\|_{L^2}^2$ with fixed $s \in [0,T)$, we obtain
\begin{align}\label{e4:1}
&\|f(u(t))-f(u(s))\|_{L^2}^2 = 2\int_s^t \int_{\D} \big( f(u(\zeta))-f(u(s))\big)f'(u(\zeta)) \\
&\qquad 
\times \Bigl[ \Delta u(\zeta)+f(u(\zeta)) \Bigr] \, dx \, d\zeta\notag\\
&\quad 
+ 2 \int_s^t \int_{\D} \bigl( f(u(\zeta))-f(u(s)) \bigr) f'(u(\zeta))  g(u(\zeta)) \, dx \, d W(\zeta) \notag\\
&\quad 
+  \int_s^t \int_{\D} \bigl( f(u(\zeta))-f(u(s)) \bigr) f''(u(\zeta)) 
| g(u(\zeta)) |^2 \, dx \, d\zeta \notag\\
&\quad 
+  \int_s^t \int_{\D}[f'(u(\zeta))]^2| g(u(\zeta)) |^2 \, dx \, d\zeta. \notag
\end{align}

Taking the expectation on both sides, it follows from integration by parts and Young's inequality that
\begin{align}\label{e4:2}
& \E \left[ \|f(u(t))-f(u(s))\|_{L^2}^2 \right] 
\leq C(t-s)\times \Bigl( \sup_{s \leq \zeta \leq t} 
\E \left[\|\Delta u(\zeta)\|_{L^2}^2 \right]+\\
& \sup_{s \leq \zeta \leq t} 
\E \left[\|u(\zeta)\|_{L^2}^2 \right]
+ \sup_{s \leq \zeta \leq t} \E \left[ \|u(\zeta)\|_{L^{2q}}^{2q} \right]+ \sup_{s \leq \zeta \leq t} \E \left[ \|u(\zeta)\|_{L^{4}}^{4} \right]+C\Bigr)\notag\\
&\times\Bigl(\sup_{s \leq \zeta \leq t} 
\E \left[ \|u(\zeta)\|_{L^{\infty}}^{q-2} \right]+\sup_{s \leq \zeta \leq t} 
\E \left[ \|u(\zeta)\|_{L^{\infty}}^{2q-2} \right]+C\Bigr). \notag
\end{align}

Finally, the desired Lemma \ref{lem:e4} follows from \eqref{e4:2}.
\end{proof}

\begin{remark}
(a) For the diffusion term, the global Lipschitz condition, which is stronger than the one-side
  Lipschitz condition, is needed as in the SODE case. Using the $C^1$ assumption and the global
   Lipschitz assumption, we can derive that the derivative of the diffusion term is bounded by the 
   Lipschitz constant $\kappa$, i.e., $|g'(u)|\le\kappa$, but the diffusion term itself may not be 
   bounded. For instance, $g(u)=u$, $g(u)=\sqrt{u^2+1}$, etc. Notice these two assumptions are 
   consistent with the SODE case in \cite{higham2002strong}, and they are also the conditions 
   to guarantee the well-posedness \cite{higham2002strong} of the strong SODE solution;

(b) We can verify $f(u)$ in \eqref{eq20180812_1} satisfies a one-sided Lipschitz condition 
\eqref{eq20180918_1}. If the drift term $f$ behaves polynomially, then for the one-sided 
Lipschitz condition \eqref{eq20180918_1}, we have the following conclusions:

(1). The power $q$ of the highest order term must be odd. Because when the highest power $q$ is even, dividing $\pm c_q(a^q-b^q)$ by $a-b$ yields the the quotient is odd so that it can be $+\infty$ and $-\infty$. When choosing $a$ and $b$ sufficiently large or small, the absolute value of this term is dominant and the left-hand side of \eqref{eq20180918_1} is $C|a-b|^2$ where $C$ can be $+\infty$, which is a contradiction;

(2). The sign of the highest odd order term must be negative. Because this term is dominant and the quotient of dividing $c_q(a^q-b^q)$ by $a-b$ can be $+\infty$, which contradicts \eqref{eq20180918_1}.
\end{remark}

\section{Fully discrete finite element approximation}\label{sec-3}
\subsection{Formulation of the finite element method}\label{sec-3.1}
In this section, we first construct a fully discrete finite element method for problem 
\eqref{sac_s}--\eqref{sac_s3}. we then establish several stability properties for 
the numerical solution including the stability of higher order moments for 
its $H^1$-seminorm and $L^2$-norm. Finally, we derive optimal order error estimates in strong norms
for the numerical solution using the stability estimates.  

Let $t_n=n\tau\ (n = 0, 1, \ldots, N)$ be a uniform partition of $[0,T]$ and $\mathcal{T}_h$ 
be the triangulation of $\D$ satisfying the following assumption \cite{xu1999monotone}:
\begin{equation}\label{eq20180907}
\frac{1}{d(d-1)}\sum_{K\supset E}|\kappa_E^K|\cot\theta_E^K\ge0,
\end{equation}
where $E$ denotes the edge of simplex $K$. It was proved in \cite{xu1999monotone} that the stiffness matrix for the Poisson equation with zero Dirichlet boundary is an $M$-matrix if and only if this assumption holds for all edges. The stiffness matrix is diagonally dominant if the Neumann boundary condition is considered. Notice this assumption is just the Delaunay triangulation when $d=2$. In 3D, the notations in the assumption \eqref{eq20180907} are as follows: $a_i (1\leq i \leq d+1)$ denote the
vertices of $K$, $E=E_{ij}$ the edge connecting two vertices $a_i$ and
$a_j$, $F_i$ the $(d-1)$-dimensional simplex opposite to the vertex
$a_i$, $\theta_{ij}^K$ or $\theta_E^K$ the angle between the faces
$F_i$ and $F_j$, $\kappa_E^K=F_i \cap F_j$ , the $(d-2)$-dimensional simplex opposite to the edge $E=E_{ij}$. See Figure \ref{fig1} below.
\begin{figure}[H]
\centering 
\includegraphics[height=1.5in,width=2.0in]{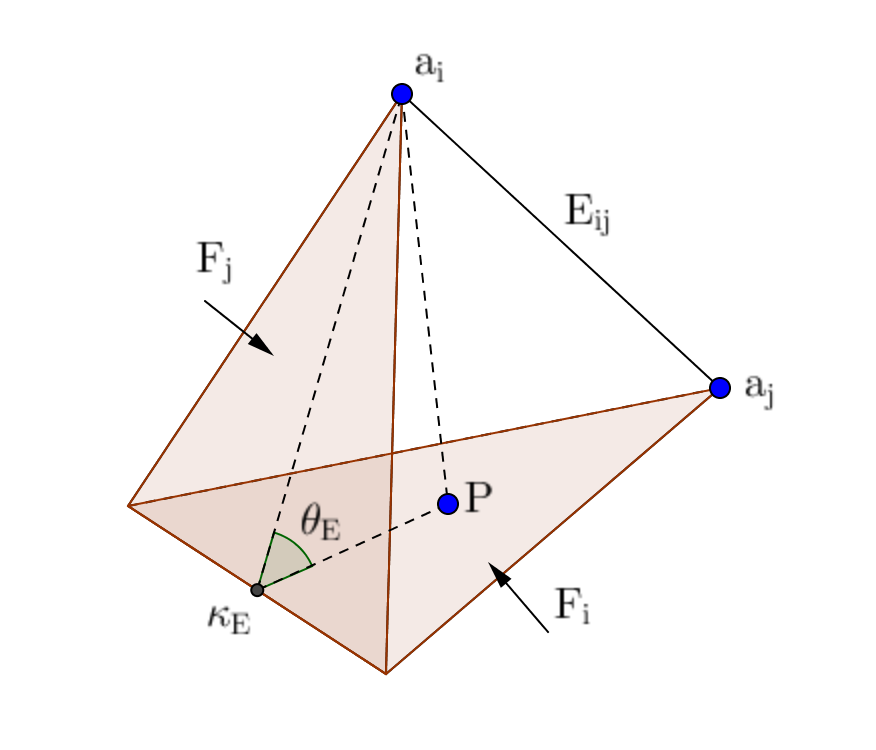} 
\caption{3D triangulation.}
\label{fig1}
\end{figure}

Consider the $\mathcal{P}_1$-Lagrangian finite element space
\begin{align}\label{eq20180713_1}
V_h = \bigl\{v_h \in H^1(\D): v_h|_{K} \in \mathcal{P}_1(K)\quad\forall K\in\mathcal{T}_h\bigr\},
\end{align}
where $\mathcal{P}_1$ denotes the space of all linear polynomials. Then the finite element approximation of \eqref{var_solu} is to seek an $\mathcal{F}_{t_n}$ adapted $V_h$-valued process $\{u_h^n\}_{n=1}^N$ such that it holds $\mathbb{P}$-almost surely that
\begin{align}\label{dfem}
(u^{n+1}_h, v_h) &+ \tau ( \nabla u^{n+1}_h, \nabla v_h )\\
&= (u^n_h, v_h) + \tau (I_hf^{n+1}, v_h)+ (g(u^n_h), v_h) \, \bar{\Delta} W_{n+1} \qquad \forall \, v_h \in V_h \notag,
\end{align}
where $f^{n+1}:=u^{n+1}_h-(u^{n+1}_h)^q$, $\bar{\Delta} W_{n+1}=W(t_{n+1})-W(t_n) \sim \mathcal{N} (0,\tau)$, and $I_h$ is the standard nodal value interpolation operator
$I_h: C(\bar{\Omega}) \longrightarrow V_h$, i.e.,
\begin{equation} \label{interpolation}
I_h v := \sum_{i=1}^{N_h} v(a_i)\varphi_i,
\end{equation} 
where $N_h$ denotes the number of vertices of $\mathcal{T}_h$, 
and ${\varphi_i}$ denotes the nodal basis function of $V_h$ corresponding to the vertex $a_i$.
The initial condition is chosen by $u_h^0  = P_h u_0$ where $P_h: L^2(\D) \longrightarrow V_h$ is the $L^2$-projection operator defined by
\begin{align*}
\bigl(P_h w, v_h\bigr) = (w, v_h) \qquad v_h \in V_h.
\end{align*}
\par 
For all $w \in H^s(\D)$, the following well-known error estimate results can be found in \cite{BS2008, ciarlet2002finite}:
\begin{align}
\label{Ph1}
&\|w - P_h w \|_{L^2} + h \| \nabla (w - P_h w) \|_{L^2} 
\leq C h^{\min\{2,s\}} \|w\|_{H^s},\\ 
\label{Ph2}
&\|w - P_h w\|_{L^\infty} \leq C h^{2-\frac{d}{2}} \|w\|_{H^2}. 
\end{align}

Finally, given $v_h \in {V}_h$, we define the discrete Laplace operator $\Delta_h: {V}_h\longrightarrow 
{V}_h$ by 
\begin{equation} \label{eq:discrete-Laplace}
(\Delta_h v_h, w_h)=-(\nabla v_h,\nabla w_h) \qquad \forall\, w_h\in V_h.
\end{equation}

\subsection{Stability estimates for the $p$-th moment of the $H^1$-seminorm of $u_h^n$} \label{sec-3.2}
First we shall prove the second moment discrete $H^1$-seminorm stability result, which is necessary to establish the corresponding higher moment stability result.
\begin{theorem}\label{thm20180711_1}
Suppose the mesh assumption in \eqref{eq20180907} holds, then
\begin{align}\label{eq20180711_7}
\sup_{0\leq n \leq N}\E\left[\|\nabla u^{n}_h\|_{L^2}^2\right]&+\frac14\sum_{n=0}^{N-1}\E\left[\|\nabla (u^{n+1}_h-u^{n}_h)\|_{L^2}^2\right] \\ 
&\quad +\tau\sum_{n=0}^{N-1}\E\left[\|\Delta_h u_h^{n+1}\|_{L^2}^2\right]\le C. \notag 
\end{align}
\end{theorem}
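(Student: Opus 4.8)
The plan is to test the scheme \eqref{dfem} with the discrete function $v_h=-\Delta_h u_h^{n+1}\in V_h$; this is the fully discrete analogue of testing the SPDE against $-\Delta u$, which is the natural way to generate control of $\nabla u$. Using the definition \eqref{eq:discrete-Laplace} of $\Delta_h$ and its self-adjointness on $V_h$, the two left-hand terms of \eqref{dfem} become $\|\nabla u_h^{n+1}\|_{L^2}^2$ and $\tau\|\Delta_h u_h^{n+1}\|_{L^2}^2$, while $(u_h^n,-\Delta_h u_h^{n+1})=(\nabla u_h^n,\nabla u_h^{n+1})$ is split through the polarization identity $(a,b)=\tfrac12(\|a\|_{L^2}^2+\|b\|_{L^2}^2-\|a-b\|_{L^2}^2)$. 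After rearranging, this isolates on the left exactly the telescoping quantity $\tfrac12\|\nabla u_h^{n+1}\|_{L^2}^2-\tfrac12\|\nabla u_h^n\|_{L^2}^2$, the increment $\tfrac12\|\nabla(u_h^{n+1}-u_h^n)\|_{L^2}^2$, and the dissipation $\tau\|\Delta_h u_h^{n+1}\|_{L^2}^2$, i.e.\ the three quantities appearing in \eqref{eq20180711_7}.

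The crux of the argument, and the step I expect to be the main obstacle, is the nonlinear drift term $\tau(I_h f^{n+1},-\Delta_h u_h^{n+1})$. Since $u_h^{n+1}\in V_h$ and $I_h$ reproduces $V_h$, I would write $I_h f^{n+1}=u_h^{n+1}-I_h((u_h^{n+1})^q)$, so that this term equals $\tau\|\nabla u_h^{n+1}\|_{L^2}^2+\tau\,(I_h((u_h^{n+1})^q),\Delta_h u_h^{n+1})$. Expanding $u_h^{n+1}=\sum_i U_i\varphi_i$ in the nodal basis and writing $A_{ij}=(\nabla\varphi_i,\nabla\varphi_j)$ for the stiffness matrix, the discrete-Laplacian identity gives $(I_h((u_h^{n+1})^q),\Delta_h u_h^{n+1})=-\sum_{i,j}A_{ij}U_i^q U_j$. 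Using the symmetry of $A$ and its vanishing row sums $\sum_j A_{ij}=0$ (which hold for the Neumann problem because $\sum_j\varphi_j\equiv1$), this double sum symmetrizes into $\tfrac12\sum_{i\neq j}A_{ij}(U_i-U_j)(U_i^q-U_j^q)$. The mesh assumption \eqref{eq20180907} ensures the stiffness matrix is diagonally dominant with $A_{ij}\le0$ for $i\neq j$, while the monotonicity of $t\mapsto t^q$ ($q$ odd) forces $(U_i-U_j)(U_i^q-U_j^q)\ge0$; hence every summand is nonpositive and the whole piece is $\le0$. This sign/monotonicity argument, which relies simultaneously on the $M$-matrix structure furnished by the special triangulation and on interpolating the nonlinearity by $I_h$, is precisely the delicate device adapted from \cite{majee2018optimal}. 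The leftover $\tau\|\nabla u_h^{n+1}\|_{L^2}^2$ coming from the linear part of $f$ is harmless and will be absorbed later by Gronwall.

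For the stochastic term $(g(u_h^n),-\Delta_h u_h^{n+1})\,\bar{\Delta} W_{n+1}$ one cannot take expectation directly, since the implicit iterate $u_h^{n+1}$ is not $\mathcal{F}_{t_n}$-measurable. I would split $u_h^{n+1}=u_h^n+(u_h^{n+1}-u_h^n)$: because $g(u_h^n)$ and $\Delta_h u_h^n$ are $\mathcal{F}_{t_n}$-measurable while $\bar{\Delta} W_{n+1}$ has mean zero and is independent of $\mathcal{F}_{t_n}$, the $u_h^n$ contribution vanishes in expectation. For the increment contribution I would rewrite $(g(u_h^n),-\Delta_h(u_h^{n+1}-u_h^n))=(\nabla P_h g(u_h^n),\nabla(u_h^{n+1}-u_h^n))$ via \eqref{eq:discrete-Laplace} and the definition of $P_h$, and then use Cauchy--Schwarz and Young's inequality together with $\E[|\bar{\Delta} W_{n+1}|^2]=\tau$ to bound its expectation by $\tfrac14\E[\|\nabla(u_h^{n+1}-u_h^n)\|_{L^2}^2]+\tau\,\E[\|\nabla P_h g(u_h^n)\|_{L^2}^2]$. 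The $H^1$-stability of $P_h$ and the bound $\|\nabla g(u_h^n)\|_{L^2}=\|g'(u_h^n)\nabla u_h^n\|_{L^2}\le\kappa_1\|\nabla u_h^n\|_{L^2}$ (a consequence of \eqref{eq20180812_2}) then reduce the last term to $C\tau\,\E[\|\nabla u_h^n\|_{L^2}^2]$. The factor $\tfrac14\|\nabla(u_h^{n+1}-u_h^n)\|_{L^2}^2$ is absorbed into the $\tfrac12$-increment on the left, leaving exactly the coefficient $\tfrac14$ in \eqref{eq20180711_7}.

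Collecting these bounds yields, after taking expectations, a one-step inequality whose left side contains $\tfrac12\E[\|\nabla u_h^{n+1}\|_{L^2}^2]-\tfrac12\E[\|\nabla u_h^n\|_{L^2}^2]$, $\tfrac14\E[\|\nabla(u_h^{n+1}-u_h^n)\|_{L^2}^2]$ and $\tau\E[\|\Delta_h u_h^{n+1}\|_{L^2}^2]$, and whose right side is controlled by $C\tau(\E[\|\nabla u_h^{n+1}\|_{L^2}^2]+\E[\|\nabla u_h^n\|_{L^2}^2])$. Summing over $n=0,\dots,M-1$ for an arbitrary $M\le N$ telescopes the first two terms and bounds the right side by $C\tau\sum_{n=0}^{M}\E[\|\nabla u_h^n\|_{L^2}^2]$. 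Since $\E[\|\nabla u_h^0\|_{L^2}^2]=\|\nabla P_h u_0\|_{L^2}^2\le C$ by the $H^1$-stability of $P_h$ and the smoothness of $u_0$, the discrete Gronwall inequality (valid once $\tau$ is small enough that $C\tau<\tfrac12$) gives $\sup_{0\le n\le N}\E[\|\nabla u_h^n\|_{L^2}^2]\le C$. Feeding this uniform bound back into the summed inequality then controls the increment sum and the $\Delta_h$-dissipation sum, which establishes \eqref{eq20180711_7}.
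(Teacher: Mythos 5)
Your proposal is correct and follows essentially the same route as the paper: test with $-\Delta_h u_h^{n+1}$, show $\tau(I_hf^{n+1},-\Delta_h u_h^{n+1})\le\tau\|\nabla u_h^{n+1}\|_{L^2}^2$ using the sign structure of the stiffness matrix guaranteed by \eqref{eq20180907}, treat the noise term via the martingale property and the $H^1$-stability of $P_h$, and conclude with Gronwall. The only cosmetic difference is in the nonlinear term, where you symmetrize $-\sum_{i,j}A_{ij}U_i^qU_j$ using the zero row sums and the monotonicity of $t\mapsto t^q$, whereas the paper bounds $b_{ij}=u_i^qu_j$ by Young's inequality and invokes diagonal dominance; both rest on the same $M$-matrix property and yield the same nonpositivity.
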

\begin{proof}
Testing \eqref{dfem} with $-\Delta_h u_h^{n+1}$, then
\begin{align}\label{eq20180711_1}
&(u^{n+1}_h-u^n_h, -\Delta_h u_h^{n+1}) + \tau ( \nabla u^{n+1}_h, -\nabla\Delta_h u_h^{n+1} )  \\
& \quad = \tau(I_hf^{n+1}, -\Delta_h u_h^{n+1})+ ( g(u^n_h), -\Delta_h u_h^{n+1}) \, \bar{\Delta} W_{n+1}.\notag
\end{align}

Using the definition of the discrete Laplace operator, we get
\begin{align}\label{eq20180711_2}
(u^{n+1}_h-u^n_h, -\Delta_h u_h^{n+1})&=\frac12\|\nabla u^{n+1}_h\|_{L^2}^2-\frac12\|\nabla u^{n}_h\|_{L^2}^2\\
&\qquad+\frac12\|\nabla (u^{n+1}_h-u^{n}_h)\|_{L^2}^2,\notag\\
\tau ( \nabla u^{n+1}_h, -\nabla\Delta_h u_h^{n+1} )&=\tau\|\Delta_h u_h^{n+1}\|_{L^2}^2,\label{eq20181009_2}\\
\E[( g(u^n_h), -\Delta_h u_h^{n+1}) \, \bar{\Delta} W_{n+1}]&=\E [(\nabla(P_hg(u^n_h)), \nabla (u_h^{n+1}-u^n_h)) \, \bar{\Delta} W_{n+1}]\label{eq20181009_3}\\
&\le C\tau\E[\|\nabla u^n_h\|_{L^2}^2]+\frac14\E[\|\nabla (u_h^{n+1}-u^n_h)\|_{L^2}^2],\notag
\end{align}
where the stability in the $H^1$-seminorm of the $L^2$ projection \cite{bank2014h} is used in the inequality of \eqref{eq20181009_3}.

The crucial part is to bound the first term on the right-hand side of \eqref{eq20180711_1} since it cannot be treated as a bad term, which aligns with the continuous case. Denote $u_i=u_h^{n+1}(a_i)$, then
\begin{align}\label{eq20180711_3}
\tau (I_hf^{n+1}, -\Delta_h u_h^{n+1})&=\tau\|\nabla u^{n+1}_h\|_{L^2}^2-\tau(\nabla\sum_{i=1}^{N_h} u_i^q \varphi_i,\nabla \sum_{j=1}^{N_h} u_j\varphi_j)\\
&=\tau\|\nabla u^{n+1}_h\|_{L^2}^2-\tau \sum_{i,j=1}^{N_h} ( u_i^q \nabla\varphi_i,  u_j\nabla\varphi_j)\notag\\
&=\tau\|\nabla u^{n+1}_h\|_{L^2}^2-\tau \sum_{i,j=1}^{N_h} b_{ij}(\nabla\varphi_i,  \nabla\varphi_j)\notag,
\end{align}
where $b_{ij}=u_i^q u_j$.

Using Young's inequality when $i\neq j$, we have
\begin{align}\label{eq20180711_4}
|b_{ij}|\le \frac{q}{q+1}u_i^{q+1}+\frac{1}{q+1}u_j^{q+1}.
\end{align}

Besides, since the stiffness matrix is diagonally dominant, then
\begin{align}
-\tau \sum_{i,j=1}^{N_h} b_{ij}(\nabla\varphi_i,  \nabla\varphi_j)&\le-\tau \sum_{k=1}^{N_h} b_{kk}[(\nabla\varphi_k,  \nabla\varphi_k)-\frac{q}{q+1}\sum_{i=1,\atop i\neq k}^{N_h} |(\nabla\varphi_i,  \nabla\varphi_k)|\\
&\quad-\frac{1}{q+1}\sum_{j=1,\atop j\neq k}^{N_h} |(\nabla\varphi_k,  \nabla\varphi_j)|]\notag\\
&\le-\tau \sum_{k=1}^{N_h} b_{kk}[(\nabla\varphi_k,  \nabla\varphi_k)-\sum_{i=1,\atop i\neq k}^{N_h} (\nabla\varphi_i,  \nabla\varphi_k)]\notag\\
&\le0\notag.
\end{align}

Then we have
\begin{align}\label{eq20180711_6}
\tau (I_hf^{n+1}, -\Delta_h u_h^{n+1})\le\tau\|\nabla u^{n+1}_h\|_{L^2}^2.
\end{align}

Combining \eqref{eq20180711_1}--\eqref{eq20181009_3} and \eqref{eq20180711_6}, and taking the summation, we have
\begin{align}\label{eq20180712_1}
&\frac12\E\left[\|\nabla u^{\ell}_h\|_{L^2}^2\right]+\frac14\sum_{n=0}^{\ell-1}\E\left[\|\nabla (u^{n+1}_h-u^{n}_h)\|_{L^2}^2\right]+\tau\sum_{n=0}^{\ell-1}\E\left[\|\Delta_h u_h^{n+1}\|_{L^2}^2\right]\\
&\quad\le C\tau\sum_{n=0}^{\ell-1}\E[\|\nabla u^n_h\|_{L^2}^2].\notag
\end{align}

Using Gronwall's inequality, we obtain \eqref{eq20180711_7}. 
\end{proof}

Before we establish the error estimates, we need to prove the stability of 
the higher order moments for the $H^1$-seminorm of the numerical solution. 

\begin{theorem}\label{thm20180802_1}
Suppose the mesh assumption in \eqref{eq20180907} holds, then for any $p\ge2$,
\begin{align*}
\sup_{0 \leq n \leq N} \E\left[\|\nabla u^{n}_h\|_{L^2}^p\right]\le C.
\end{align*}
\end{theorem}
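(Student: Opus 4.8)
The plan is to reduce the higher-moment bound to the one-step energy relation already exposed in the proof of Theorem \ref{thm20180711_1}, and then raise that relation to the power $p/2$, exploiting the Gaussian symmetry of the increment $\bar{\Delta} W_{n+1}$ to annihilate the leading stochastic contribution. By Jensen's inequality it suffices to treat the case $p=2S$ with $S\in\mathbb{N}$; the remaining $p\in(2,\infty)$ then follow from the interpolation $\E[\|\nabla u_h^n\|_{L^2}^p]\le(\E[\|\nabla u_h^n\|_{L^2}^{2S}])^{p/(2S)}$ with $2S\ge p$. Throughout I write $\alpha_n:=\|\nabla u_h^n\|_{L^2}^2$.

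The first step is to reassemble the pathwise one-step identity. Testing \eqref{dfem} with $-\Delta_h u_h^{n+1}$ exactly as in \eqref{eq20180711_1}--\eqref{eq20180711_6}, and using the diagonal dominance of the stiffness matrix to retain $\tau(I_hf^{n+1},-\Delta_h u_h^{n+1})\le\tau\alpha_{n+1}$, I obtain
\[
\alpha_{n+1}-\alpha_n+\|\nabla(u_h^{n+1}-u_h^n)\|_{L^2}^2+2\tau\|\Delta_h u_h^{n+1}\|_{L^2}^2\le 2\tau\alpha_{n+1}+2M_{n+1},
\]
where $M_{n+1}=(\nabla P_hg(u_h^n),\nabla u_h^{n+1})\,\bar{\Delta} W_{n+1}$. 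The decisive move is to split this term along $u_h^{n+1}=u_h^n+(u_h^{n+1}-u_h^n)$ into $M_{n+1}=A_n\,\bar{\Delta} W_{n+1}+(\nabla P_hg(u_h^n),\nabla(u_h^{n+1}-u_h^n))\,\bar{\Delta} W_{n+1}$, with $A_n:=(\nabla P_hg(u_h^n),\nabla u_h^n)$ being $\F_{t_n}$-measurable. The correction is absorbed by Young's inequality: half into the dissipative term $\|\nabla(u_h^{n+1}-u_h^n)\|_{L^2}^2$ and the rest into $C\alpha_n|\bar{\Delta} W_{n+1}|^2$, where I use the $H^1$-stability of $P_h$ together with $|g'|\le\kappa_1$ to get $\|\nabla P_hg(u_h^n)\|_{L^2}^2\le C\alpha_n$ and $|A_n|\le C\alpha_n$. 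Dropping the nonnegative dissipative terms and using $\tau\le 1/4$ so that $(1-2\tau)^{-1}\le 1+C\tau$, this yields the clean recursion
\[
\alpha_{n+1}\le \alpha_n\bigl(1+C\tau+C|\bar{\Delta} W_{n+1}|^2\bigr)+2(1+C\tau)A_n\,\bar{\Delta} W_{n+1}=:P_n+Q_n,
\]
with $P_n\ge 0$ the $\bar{\Delta} W_{n+1}$-even part and $Q_n$ the $\bar{\Delta} W_{n+1}$-odd (conditionally mean-zero) part; since $\alpha_{n+1}\ge 0$, necessarily $P_n+Q_n\ge\alpha_{n+1}\ge 0$.

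The second step is to raise to the power $S$ and expand. By the binomial theorem, $\alpha_{n+1}^S\le(P_n+Q_n)^S=\sum_{k=0}^S\binom{S}{k}P_n^{S-k}Q_n^k$, and I take $\E[\,\cdot\,|\,\F_{t_n}]$. Because $\bar{\Delta} W_{n+1}\sim\mathcal{N}(0,\tau)$ is independent of $\F_{t_n}$ and symmetric, every $k$-odd term is an odd function of $\bar{\Delta} W_{n+1}$ and has zero conditional mean; this is precisely where the first-order ($O(\sqrt\tau)$) contribution vanishes. For the surviving even $k\ge2$, the Gaussian moments $\E[|\bar{\Delta} W_{n+1}|^{2j}\,|\,\F_{t_n}]=C_j\tau^j\le C_j\tau$, combined with $|A_n|\le C\alpha_n$ and $P_n\le C\alpha_n(1+|\bar{\Delta} W_{n+1}|^2)$, bound each term by $C\tau\,\alpha_n^S$; the $k=0$ term gives $\alpha_n^S\,\E[(1+C\tau+C|\bar{\Delta} W_{n+1}|^2)^S\,|\,\F_{t_n}]\le\alpha_n^S(1+C\tau)$. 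Summing produces $\E[\alpha_{n+1}^S\,|\,\F_{t_n}]\le(1+C\tau)\alpha_n^S$, hence $\E[\alpha_{n+1}^S]\le(1+C\tau)\E[\alpha_n^S]$, and a discrete Gronwall iteration gives $\E[\alpha_n^S]\le e^{CT}\|\nabla P_hu_0\|_{L^2}^{2S}\le C$ uniformly in $n$, which is the claim for $p=2S$.

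The main obstacle is exactly this second step: a pathwise estimate of $|Q_n|^S$ only yields $O(\tau^{S/2})$ per step, which fails to sum over the $O(1/\tau)$ time levels (the discrete analogue of the impossibility of controlling a stochastic integral path-by-path). What rescues the argument is the cancellation of the odd-order terms under $\E[\,\cdot\,|\,\F_{t_n}]$, which cuts the per-step stochastic cost from $O(\tau^{S/2})$ down to $O(\tau)$; this is the nontrivial technique that must be imported from \cite{majee2018optimal}, and it depends on having isolated the genuinely $\F_{t_n}$-measurable factor $A_n$ (bounded by $C\alpha_n$ via the $H^1$-stability of $P_h$) from the $u_h^{n+1}$-dependent correction, the latter having first been paid for by the dissipation already present in the energy relation.
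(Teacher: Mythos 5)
Your proof is correct, and it reaches the conclusion by a genuinely different route from the paper's. The paper iterates a doubling procedure: starting from the one-step energy inequality \eqref{eq20180802_1}, it multiplies by $\|\nabla u_h^{n+1}\|_{L^2}^2+\tfrac12\|\nabla u_h^n\|_{L^2}^2$ and uses the identity \eqref{eq20180802_2} to pass from the second to the fourth moment, then repeats with fourth powers to reach the eighth, and so on, obtaining only the dyadic moments $2^r$ before closing with Young's inequality. You instead compress the same one-step inequality into a single multiplicative recursion $\alpha_{n+1}\le P_n+Q_n$ for $\alpha_n=\|\nabla u_h^n\|_{L^2}^2$, raise it to an arbitrary integer power $S$, and let the conditional symmetry and independence of the Gaussian increment annihilate all odd-order terms of the binomial expansion, which delivers every even moment in one pass and closes with Jensen. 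The essential mechanism is identical in both arguments: the noise term $(\nabla P_hg(u_h^n),\nabla u_h^{n+1})\,\bar{\Delta}W_{n+1}$ is split into the $\F_{t_n}$-measurable part $A_n\,\bar{\Delta}W_{n+1}$, which has zero conditional mean, plus a correction absorbed by the dissipation $\|\nabla(u_h^{n+1}-u_h^n)\|_{L^2}^2$, so the per-step stochastic cost drops from $O(\sqrt{\tau})$ to $O(\tau)$; this is precisely the role played by the parameters $\theta_1,\theta_2,\theta_3$ and the vanishing expectation of $\tfrac32(\nabla P_hg(u_h^n),\nabla u_h^n)\bar{\Delta}W_{n+1}\|\nabla u_h^n\|_{L^2}^2$ in \eqref{eq20180802_7}. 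Your version buys a cleaner, fully explicit treatment of all even moments at once (the paper's Step 2 is only sketched via ``repeating this process''), at the cost of discarding the weighted dissipative sums retained on the left of \eqref{eq20180802_10}, which are in any case not needed for the stated theorem. One shared technicality you should make explicit: asserting that the odd-$k$ terms have zero conditional mean presupposes $\E[\alpha_n^S]<\infty$ at each level $n$; this follows from a crude induction on $n$ using the independence of $\bar{\Delta}W_{n+1}$ from $\F_{t_n}$ in your recursion (the paper's proof leaves the same point implicit).
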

\begin{proof}
The proof is divided into three steps. In Step 1, we establish the bound for $\E\|\nabla u^{\ell}_h\|_{L^2}^{4}$. 
In Step 2, we give the bound for $\E\|\nabla u^{\ell}_h\|_{L^2}^p$, where $p=2^r$ and $r$ is an arbitrary 
positive integer. In Step 3, we obtain the bound for $\E\|\nabla u^{\ell}_h\|_{L^2}^p$, where $p$ is an 
arbitrary real number and $p\ge2$.

\smallskip
{\bf Step 1.} Based on \eqref{eq20180711_1}--\eqref{eq20180711_6}, we have
\begin{align}\label{eq20180802_1}
&\frac12 \|\nabla u^{n+1}_h\|_{L^2}^2 -\frac12 \|\nabla u^{n}_h\|_{L^2}^2 +\frac12 \|\nabla (u^{n+1}_h-u^{n}_h)\|_{L^2}^2 +\tau \|\Delta_h u_h^{n+1}\|_{L^2}^2 \\
&\quad -( g(u^n_h), -\Delta_h u_h^{n+1}) \, \bar{\Delta} W_{n+1}\le\tau \|\nabla u^{n+1}_h\|_{L^2}^2.\notag
\end{align}

Notice the following identity
\begin{align}\label{eq20180802_2}
\|\nabla u^{n+1}_h\|_{L^2}^2+\frac12\|\nabla u^{n}_h\|_{L^2}^2=&\frac34(\|\nabla u^{n+1}_h\|_{L^2}^2+\|\nabla u^n_h\|_{L^2}^2)+\frac14(\|\nabla u^{n+1}_h\|_{L^2}^2-\|\nabla u^n_h\|_{L^2}^2),
\end{align}
and multiplying \eqref{eq20180802_1} with $\|\nabla u^{n+1}_h\|_{L^2}^2+\frac12\|\nabla u^{n}_h\|_{L^2}^2$, we obtain
\begin{align}\label{eq20180802_3}
&\frac38(\|\nabla u^{n+1}_h\|_{L^2}^4-\|\nabla u^n_h\|_{L^2}^4)+\frac18(\|\nabla u^{n+1}_h\|_{L^2}^2-\|\nabla u^n_h\|_{L^2}^2)^2\\
&\quad+(\frac12\|\nabla (u^{n+1}_h-u^{n}_h)\|_{L^2}^2+\tau\|\Delta_h u_h^{n+1}\|_{L^2}^2)(\|\nabla u^{n+1}_h\|_{L^2}^2+\frac12\|\nabla u^{n}_h\|_{L^2}^2)\notag\\
&\le\tau\|\nabla u^{n+1}_h\|_{L^2}^2(\|\nabla u^{n+1}_h\|_{L^2}^2+\frac12\|\nabla u^{n}_h\|_{L^2}^2)\notag\\
&\quad+( g(u^n_h), -\Delta_h u_h^{n+1}) \, \bar{\Delta} W_{n+1}(\|\nabla u^{n+1}_h\|_{L^2}^2+\frac12\|\nabla u^{n}_h\|_{L^2}^2).\notag
\end{align}

The first term on the right-hand side of \eqref{eq20180802_3} can be written as
\begin{align}\label{eq20180802_4}
&\tau\|\nabla u^{n+1}_h\|_{L^2}^2(\|\nabla u^{n+1}_h\|_{L^2}^2+\frac12\|\nabla u^{n}_h\|_{L^2}^2)\\
&\quad=\tau\|\nabla u^{n+1}_h\|_{L^2}^2(\frac32\|\nabla u^{n+1}_h\|_{L^2}^2-\frac12(\|\nabla u^{n+1}_h\|_{L^2}^2-\|\nabla u^n_h\|_{L^2}^2))\notag\\
&\quad\le C\tau\|\nabla u^{n+1}_h\|_{L^2}^4+\theta_1(\|\nabla u^{n+1}_h\|_{L^2}^2-\|\nabla u^n_h\|_{L^2}^2)^2\notag,
\end{align}
where $\theta_1>0$ will be determined later.

The second term on the right-hand side of \eqref{eq20180802_3} can be written as
\begin{align}\label{eq20180802_5}
&(g(u^n_h), -\Delta_h u_h^{n+1}) \, \bar{\Delta} W_{n+1}(\|\nabla u^{n+1}_h\|_{L^2}^2+\frac12\|\nabla u^{n}_h\|_{L^2}^2)\\
&\quad=(\nabla P_hg(u^n_h), \nabla u_h^{n+1}) \, \bar{\Delta} W_{n+1}(\|\nabla u^{n+1}_h\|_{L^2}^2+\frac12\|\nabla u^{n}_h\|_{L^2}^2)\notag\\
&\quad= ((\nabla P_hg(u^n_h), \nabla u_h^{n+1}-\nabla u_h^n)\bar{\Delta} W_{n+1}\notag\\
&\qquad+(\nabla P_hg(u^n_h),\nabla u_h^n)\bar{\Delta} W_{n+1})(\|\nabla u^{n+1}_h\|_{L^2}^2+\frac12\|\nabla u^{n}_h\|_{L^2}^2)\notag\\
&\quad\le(\frac14\|\nabla u_h^{n+1}-\nabla u_h^n\|_{L^2}^2+C\|\nabla u_h^n\|_{L^2}^2(\bar{\Delta} W_{n+1})^2\notag\\
&\qquad+(\nabla P_hg(u^n_h),\nabla u_h^n)\bar{\Delta} W_{n+1})(\|\nabla u^{n+1}_h\|_{L^2}^2+\frac12\|\nabla u^{n}_h\|_{L^2}^2)\notag.
\end{align}

For the right-hand side of \eqref{eq20180802_5}, using the Cauchy-Schwarz inequality, we get
\begin{align}\label{eq20180802_6}
&C\|\nabla u_h^n\|_{L^2}^2(\bar{\Delta} W_{n+1})^2(\|\nabla u^{n+1}_h\|_{L^2}^2+\frac12\|\nabla u^{n}_h\|_{L^2}^2)\\
&\quad=C\|\nabla u_h^n\|_{L^2}^2(\bar{\Delta} W_{n+1})^2(\|\nabla u^{n+1}_h\|_{L^2}^2-\|\nabla u^{n}_h\|_{L^2}^2+\frac32\|\nabla u^{n}_h\|_{L^2}^2)\notag\\
&\quad\le\theta_2(\|\nabla u^{n+1}_h\|_{L^2}^2-\|\nabla u^n_h\|_{L^2}^2)^2+C\|\nabla u_h^n\|_{L^2}^4(\bar{\Delta} W_{n+1})^4\notag\\
&\qquad+C\|\nabla u_h^n\|_{L^2}^4(\bar{\Delta} W_{n+1})^2,\notag
\end{align}
where $\theta_2>0$ will be determined later. 
Similarly, using the Cauchy-Schwarz inequality, we have
\begin{align}\label{eq20180802_7}
&(\nabla P_hg(u^n_h),\nabla u_h^n)\bar{\Delta} W_{n+1}(\|\nabla u^{n+1}_h\|_{L^2}^2+\frac12\|\nabla u^{n}_h\|_{L^2}^2)\\
&\quad=(\nabla P_hg(u^n_h),\nabla u_h^n)\bar{\Delta} W_{n+1}(\|\nabla u^{n+1}_h\|_{L^2}^2-\|\nabla u^{n}_h\|_{L^2}^2+\frac32\|\nabla u^{n}_h\|_{L^2}^2)\notag\\
&\quad\le\theta_3(\|\nabla u^{n+1}_h\|_{L^2}^2-\|\nabla u^n_h\|_{L^2}^2)^2+C\|\nabla u_h^n\|_{L^2}^4(\bar{\Delta} W_{n+1})^2\notag\\
&\qquad+\frac32(\nabla P_hg(u^n_h),\nabla u_h^n)\bar{\Delta} W_{n+1}\|\nabla u^{n}_h\|_{L^2}^2\notag,
\end{align}
where $\theta_3>0$ will be determined later.

Choosing $\theta_1, \theta_2, \theta_3$ such that $\theta_1+\theta_2+\theta_3\le\frac{1}{16}$, then taking the summation over $n$ from $0$ to $\ell-1$ and taking the expectation on both sides of \eqref{eq20180802_3}, we obtain
\begin{align}\label{eq20180802_8}
&\frac38\E\left[\|\nabla u^{\ell}_h\|_{L^2}^4\right]+\frac{1}{16}\sum_{n=0}^{\ell-1}\E\left[(\|\nabla u^{n+1}_h\|_{L^2}^2-\|\nabla u^n_h\|_{L^2}^2)^2\right]\\
&\quad+\sum_{n=0}^{\ell-1}\E\left[(\frac14\|\nabla (u^{n+1}_h-u^{n}_h)\|_{L^2}^2+\tau\|\Delta_h u_h^{n+1}\|_{L^2}^2)(\|\nabla u^{n+1}_h\|_{L^2}^2+\frac12\|\nabla u^{n}_h\|_{L^2}^2)\right]\notag\\
&\le C\tau\sum_{n=0}^{\ell-1}\E\left[\|\nabla u^{n+1}_h\|_{L^2}^4\right]+\frac38\E\left[\|\nabla u^0_h\|_{L^2}^4\right]+C\tau^2\sum_{n=0}^{\ell-1}\E\left[\|\nabla u_h^n\|_{L^2}^4\right]\notag\\
&\quad+C\tau\sum_{n=0}^{\ell-1}\E\left[\|\nabla u_h^n\|_{L^2}^4\right].\notag
\end{align}

When restricting $\tau\le C$, we have
\begin{align}\label{eq20180802_9}
&\frac14\E\left[\|\nabla u^{\ell}_h\|_{L^2}^4\right]+\frac{1}{16}\sum_{n=0}^{\ell-1}\E\left[(\|\nabla u^{n+1}_h\|_{L^2}^2-\|\nabla u^n_h\|_{L^2}^2)^2\right]\\
&\quad+\sum_{n=0}^{\ell-1}\E\left[(\frac14\|\nabla (u^{n+1}_h-u^{n}_h)\|_{L^2}^2+\tau\|\Delta_h u_h^{n+1}\|_{L^2}^2)(\|\nabla u^{n+1}_h\|_{L^2}^2+\frac12\|\nabla u^{n}_h\|_{L^2}^2)\right]\notag\\
&\le C\tau\sum_{n=0}^{\ell-1}\E\left[\|\nabla u^{n}_h\|_{L^2}^4\right]+\frac38\E\left[\|\nabla u^0_h\|_{L^2}^4\right].\notag
\end{align}

Using Gronwall's inequality, we obtain
\begin{align}\label{eq20180802_10}
&\frac14\E\left[\|\nabla u^{\ell}_h\|_{L^2}^4\right]+\frac{1}{16}\sum_{n=0}^{\ell-1}\E\left[(\|\nabla u^{n+1}_h\|_{L^2}^2-\|\nabla u^n_h\|_{L^2}^2)^2\right]\\
&\qquad+\sum_{n=0}^{\ell-1}\E\bigl[(\frac14\|\nabla (u^{n+1}_h-u^{n}_h)\|_{L^2}^2+\tau\|\Delta_h u_h^{n+1}\|_{L^2}^2)(\|\nabla u^{n+1}_h\|_{L^2}^2\notag\\
&\qquad+\frac12\|\nabla u^{n}_h\|_{L^2}^2)\bigr]\le C.\notag
\end{align}

\smallskip
{\bf Step 2.} Similar to Step 1, using \eqref{eq20180802_3}--\eqref{eq20180802_7}, we have
\begin{align}\label{eq20180802_11}
&\frac38(\|\nabla u^{n+1}_h\|_{L^2}^4-\|\nabla u^n_h\|_{L^2}^4)+\frac{1}{16}(\|\nabla u^{n+1}_h\|_{L^2}^2-\|\nabla u^n_h\|_{L^2}^2)^2\\
&\quad+(\frac14\|\nabla (u^{n+1}_h-u^{n}_h)\|_{L^2}^2+\tau\|\Delta_h u_h^{n+1}\|_{L^2}^2)(\|\nabla u^{n+1}_h\|_{L^2}^2+\frac12\|\nabla u^{n}_h\|_{L^2}^2)\notag\\
&\le C\tau\|\nabla u^{n+1}_h\|_{L^2}^4+C\|\nabla u_h^n\|_{L^2}^4(\bar{\Delta} W_{n+1})^4+C\|\nabla u_h^n\|_{L^2}^4(\bar{\Delta} W_{n+1})^2\notag\\
&\quad+C\|\nabla u_h^n\|_{L^2}^4\bar{\Delta} W_{n+1}.\notag
\end{align}

Proceed similarly as in Step 1, multiplying \eqref{eq20180802_11} with $\|\nabla u^{n+1}_h\|_{L^2}^4+\frac12\|\nabla u^{n}_h\|_{L^2}^4$, we can obtain the 8-th moment of the $H^1$-seminorm stability result of the numerical solution. 
Then repeating this process, the $2^r$-th moment of the $H^1$-seminorm stability result of the numerical solution 
can be obtained.

\smallskip
{\bf Step 3.} Suppose $2^{r-1}\le p\le 2^r$, then using Young's inequality, we have
\begin{align}\label{eq20180802_12}
\E\left[\|\nabla u^{\ell}_h\|_{L^2}^p\right]&\le \E\left[\|\nabla u^{\ell}_h\|_{L^2}^{2^r}\right]+C< \infty,
\end{align}
where the second inequality follows from the results of Step 2. The proof is complete.
\end{proof}

\subsection{Stability estimates for the $p$-th moment of the $L^2$-norm of $u_h^n$}
Since the mass matrix may not be the diagonally dominated matrix, we cannot use the above idea to prove the $L^2$ stability. Instead, we prove the stability results by utilizing the above established results. The following results hold when $q\ge3$ is the odd integer in 2D case, and when $q=3$ or $q=5$ in 3D case.
\begin{theorem}\label{thm20180911}
Suppose the mesh assumption in \eqref{eq20180907} holds, then
\begin{align*}
&\sup_{0\leq n \leq N}\E\left[\|u^{n}_h\|_{L^2}^2\right]+\sum_{n=0}^{N-1}\E\left[\|(u^{n+1}_h-u^{n}_h)\|_{L^2}^2\right]+\tau\sum_{n=0}^{N-1}\E\left[\|\nabla u_h^{n+1}\|_{L^2}^2\right]\\
&\qquad+\frac{\tau}{2}\sum_{n=0}^{N-1}\E\left[\|u_h^{n+1}\|_{L^{q+1}}^{q+1}\right]\le C.
\end{align*}
\end{theorem}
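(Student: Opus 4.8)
The plan is to test the scheme \eqref{dfem} with $u_h^{n+1}$ itself rather than with $-\Delta_h u_h^{n+1}$, which produces the natural $L^2$ energy identity. Using the algebraic identity $(a-b,a) = \frac12\|a\|^2 - \frac12\|b\|^2 + \frac12\|a-b\|^2$ on the first term and $(\nabla u_h^{n+1},\nabla u_h^{n+1}) = \|\nabla u_h^{n+1}\|_{L^2}^2$ on the second, the left-hand side delivers exactly the quantities appearing in the statement. The noise term $(g(u_h^n),u_h^{n+1})\bar\Delta W_{n+1}$ is handled in the standard way: I would split $u_h^{n+1} = (u_h^{n+1}-u_h^n) + u_h^n$, use the martingale property so that the $u_h^n$ piece has zero expectation, and absorb the increment piece via $\frac12\|u_h^{n+1}-u_h^n\|_{L^2}^2 + C\|g(u_h^n)\|_{L^2}^2(\bar\Delta W_{n+1})^2$, invoking \eqref{eq20180813_1} to control $\|g(u_h^n)\|_{L^2}^2$ by $C + C\|u_h^n\|_{L^2}^2$. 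After taking expectations the independence $\E[(\bar\Delta W_{n+1})^2] = \tau$ turns this into a term of size $C\tau(1 + \E[\|u_h^n\|_{L^2}^2])$, which is exactly the form Gronwall's inequality can digest.

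The genuinely new term is $\tau(I_h f^{n+1}, u_h^{n+1})$ coming from the drift. Writing $f^{n+1} = u_h^{n+1} - (u_h^{n+1})^q$, the linear part contributes a harmless $+\tau\|u_h^{n+1}\|_{L^2}^2$-type term (after dealing with the interpolation of the linear function, which is exact on $V_h$). The dangerous part is $-\tau(I_h((u_h^{n+1})^q), u_h^{n+1})$, and the whole point is that this should furnish the \emph{good} dissipative term $-\frac{\tau}{2}\|u_h^{n+1}\|_{L^{q+1}}^{q+1}$ appearing on the left of the statement, not a term one must fight. I expect this to be the main obstacle, for two reasons. First, because the interpolation operator $I_h$ sits inside the inner product, $(I_h((u_h^{n+1})^q), u_h^{n+1})$ is not literally $\int (u_h^{n+1})^{q+1}\,dx$; one must compare the interpolated product against the true $L^{q+1}$ integral using a quadrature/mass-lumping argument or the interpolation error estimates, and show the discrepancy is controllable (this is presumably why the theorem is restricted to $q\ge 3$ in 2D and $q\in\{3,5\}$ in 3D, since those are the dimensions/degrees where the relevant $L^\infty$ or Sobolev embeddings and inverse estimates close). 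Second, even after identifying the $L^{q+1}$ term, one needs it with a favorable sign and a coefficient large enough to dominate the error, which is where the restriction on $q$ and $d$ becomes essential.

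To control the interpolation discrepancy I would bound $|(I_h((u_h^{n+1})^q) - (u_h^{n+1})^q, u_h^{n+1})|$ using a nodal-quadrature estimate together with an inverse inequality relating $\|\nabla u_h^{n+1}\|_{L^2}$ to lower-order $L^p$ norms. This is exactly the juncture where the already-established higher-moment $H^1$-seminorm stability of Theorem \ref{thm20180802_1} enters: when the inverse estimate produces powers of $\|\nabla u_h^{n+1}\|_{L^2}$ multiplying the error, I can take expectations and apply a Cauchy--Schwarz (or Hölder) splitting so that the $H^1$ factors are absorbed into the uniformly bounded $p$-th moments $\sup_n \E[\|\nabla u_h^n\|_{L^2}^p] \le C$, leaving only a factor of $\tau$ (or $\tau h^{\alpha}$) times a constant. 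The dimension/degree restriction guarantees the relevant embedding constants and inverse-estimate exponents are admissible.

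After assembling all terms, summing over $n$ from $0$ to $\ell-1$, and taking expectations, I would arrive at
\begin{align*}
\tfrac12\E[\|u_h^{\ell}\|_{L^2}^2] + \tfrac12\sum_{n=0}^{\ell-1}\E[\|u_h^{n+1}-u_h^n\|_{L^2}^2] + \tau\sum_{n=0}^{\ell-1}\E[\|\nabla u_h^{n+1}\|_{L^2}^2] + \tfrac{\tau}{2}\sum_{n=0}^{\ell-1}\E[\|u_h^{n+1}\|_{L^{q+1}}^{q+1}] \le C + C\tau\sum_{n=0}^{\ell-1}\E[\|u_h^{n}\|_{L^2}^2],
\end{align*}
where the constant $C$ collects $\E[\|u_h^0\|_{L^2}^2]$ (bounded since $u_h^0 = P_h u_0$) and the accumulated $\tau$-order contributions from the noise and interpolation terms, the latter being finite precisely because of Theorem \ref{thm20180802_1}. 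A discrete Gronwall inequality then yields the uniform-in-$\ell$ bound on $\E[\|u_h^{\ell}\|_{L^2}^2]$, and feeding this back into the inequality above gives the bounds on the remaining three sums, completing the proof.
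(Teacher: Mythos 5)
Your proposal follows essentially the same route as the paper's proof: test \eqref{dfem} with $u_h^{n+1}$, use the martingale property and the linear growth bound \eqref{eq20180813_1} for the noise term, split $\tau(I_hf^{n+1},u_h^{n+1})$ into the exact part $\tau\|u_h^{n+1}\|_{L^2}^2-\tau\|u_h^{n+1}\|_{L^{q+1}}^{q+1}$ plus an interpolation discrepancy controlled by the $L^{(q+1)/q}$ interpolation estimate and the inverse inequality (which is exactly where the restriction $q+1-d\tfrac{q-1}{2}\ge 0$ enters), absorb the resulting $\|\nabla u_h^{n+1}\|_{L^2}^{q+1}$ via Theorem \ref{thm20180802_1}, and conclude with Gronwall. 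The only slip is bookkeeping: you absorb the full $\tfrac12\|u_h^{n+1}-u_h^n\|_{L^2}^2$ from the noise term yet still display $\tfrac12\sum_n\E[\|u_h^{n+1}-u_h^n\|_{L^2}^2]$ on the left; use a Young constant of $\tfrac14$ as the paper does so that a positive multiple of the increment sum survives.
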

\begin{proof}
Testing \eqref{dfem} with $u_h^{n+1}$, then
\begin{align}\label{eq20180711_10}
&(u^{n+1}_h-u^n_h, u_h^{n+1}) + \tau ( \nabla u^{n+1}_h, \nabla u_h^{n+1} )\\
&\quad= \tau(I_hf^{n+1}, u_h^{n+1})+(g(u^n_h), u_h^{n+1}) \, \bar{\Delta} W_{n+1}\notag.
\end{align}

We can easily prove the following inequalities:
\begin{align*}
(u^{n+1}_h-u^n_h, u_h^{n+1})&=\frac12\| u^{n+1}_h\|_{L^2}^2-\frac12\| u^{n}_h\|_{L^2}^2+\frac12\|u^{n+1}_h-u^{n}_h\|_{L^2}^2,\\
\E[(g(u^n_h),  u_h^{n+1}) \, \bar{\Delta} W_{n+1}]&=\E [(g(u^n_h),  (u_h^{n+1}-u^n_h)) \, \bar{\Delta} W_{n+1}]\\
&\le C\tau+C\tau\E[\| u^n_h\|_{L^2}^2]+\frac14\E[\|u_h^{n+1}-u^n_h\|_{L^2}^2],
\end{align*}
where \eqref{eq20180813_1} is used in the inequality above.

We have the following standard interpolation result and the inverse inequality \cite{ciarlet2002finite}:
\begin{align}\label{eq20180807_7}
\|v-I_hv\|_{L^{\frac{q+1}{q}}(K)}\le Ch_K\|\nabla v\|_{L^{\frac{q+1}{q}}(K)},\\
\|v\|_{L^{q+1}(K)}^{q+1}\le\frac{C}{h_K^{d\cdot\frac{q-1}{2}}}\|v\|_{L^2(K)}^{q+1}.\label{eq20180807_8}
\end{align}

Using \eqref{eq20180807_7}--\eqref{eq20180807_8}, and Young's inequality, we have
\begin{align}\label{eq20180808_2}
&\tau(I_hf^{n+1}, u_h^{n+1})=\tau(f^{n+1}, u_h^{n+1})-\tau(f^{n+1}-I_hf^{n+1}, u_h^{n+1})\\
&\quad\le\tau\|u_h^{n+1}\|_{L^2}^2-\tau\|u_h^{n+1}\|_{L^{q+1}}^{q+1}\notag\\
&\qquad+C\tau\|f^{n+1}-I_hf^{n+1}\|_{L^{\frac{q+1}{q}}}^{\frac{q+1}{q}}+\frac{\tau}{4}\|u_h^{n+1}\|_{L^{q+1}}^{q+1}\notag\\
&\quad\le\tau\|u_h^{n+1}\|_{L^2}^2-\tau\|u_h^{n+1}\|_{L^{q+1}}^{q+1}\notag\\
&\qquad+C\tau \sum_{K\in\mathcal{T}_h}h_K^{\frac{q+1}{q}}\bigl((u_h^{n+1})^{\frac{q^2-1}{q}},(\nabla u_h^{n+1})^{\frac{q+1}{q}}\bigr)_K+\frac{\tau}{4}\|u_h^{n+1}\|_{L^{q+1}}^{q+1}\notag\\
&\quad\le\tau\|u_h^{n+1}\|_{L^2}^2-\frac{\tau}{2}\|u_h^{n+1}\|_{L^{q+1}}^{q+1}+C\tau \sum_{K\in\mathcal{T}_h}h_K^{q+1}\|\nabla u_h^{n+1}\|_{L^{q+1}(K)}^{q+1}\notag\\
&\quad\le\tau\|u_h^{n+1}\|_{L^2}^2-\frac{\tau}{2}\|u_h^{n+1}\|_{L^{q+1}}^{q+1}+C\tau\sum_{K\in\mathcal{T}_h}h_K^{q+1-d\frac{q-1}{2}}\|\nabla u_h^{n+1}\|_{L^2(K)}^{q+1}.\notag
\end{align}

Notice when $d=2$, $q+1-d\frac{q-1}{2}\ge0$ if $q\ge0$, and when $d=3$, $q+1-d\frac{q-1}{2}\ge0$ if $q\le5$. Using the above inequalities, Theorem \ref{thm20180802_1}, taking summation over $n$ from $0$ to $\ell-1$, and taking expectation on both sides of \eqref{eq20180711_10}, we obtain
\begin{align}\label{eq20180807_1}
&\frac14\E\left[\|u^{\ell}_h\|_{L^2}^2\right]+\frac14\sum_{n=0}^{\ell-1}\E\left[\|(u^{n+1}_h-u^{n}_h)\|_{L^2}^2\right]+\tau\sum_{n=0}^{\ell-1}\E\left[\|\nabla u_h^{n+1}\|_{L^2}^2\right]\\
&\qquad+\frac{\tau}{2}\sum_{n=0}^{\ell-1}\E\left[\|u_h^{n+1}\|_{L^{q+1}}^{q+1}\right]\notag\\
&\le\tau\sum_{n=0}^{\ell-1}\E\left[\|u^n_h\|_{L^2}^2\right]+C\tau\sum_{n=0}^{\ell-1}\E\left[\|\nabla u_h^{n+1}\|_{L^2}^{q+1}\right]+C\notag\\
&\le\tau\sum_{n=0}^{\ell-1}\E\left[\|u^n_h\|_{L^2}^2\right]+C,\notag
\end{align}
where Theorem \ref{thm20180802_1} is used in the last inequality.

The conclusion is a direct result by using Gronwall's inequality.
\end{proof}

%
%

To obtain the error estimates results, we need to establish a higher moment discrete $L^2$ stability result
for the numerical solution $u_h$.

\begin{theorem}\label{thm20180808_1}
Suppose the mesh assumption in \eqref{eq20180907} holds, then for any $p\ge2$,
\begin{align*}
\sup_{0\leq \ell \leq N}\E\left[\|u^{\ell}_h\|_{L^2}^p\right]\le C\notag.
\end{align*}
\end{theorem}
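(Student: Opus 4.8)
The plan is to mirror the three-step bootstrap argument used in the proof of Theorem~\ref{thm20180802_1}, but starting from the $L^2$ energy relation instead of the $H^1$ one, and to feed the already-established higher moment $H^1$-seminorm stability (Theorem~\ref{thm20180802_1}) into the estimate of the troublesome drift contribution. In Step~1 I would establish the bound for $\E[\|u_h^{\ell}\|_{L^2}^4]$; in Step~2 I would bootstrap it to $\E[\|u_h^{\ell}\|_{L^2}^{2^r}]$ for every positive integer $r$; and in Step~3 I would use Young's inequality to interpolate and cover all real $p\ge 2$, exactly as in \eqref{eq20180802_12}.

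For Step~1, I would begin from the $L^2$-testing identity obtained by testing \eqref{dfem} with $u_h^{n+1}$ (the relation underlying \eqref{eq20180711_10}),
\[
\frac12\|u_h^{n+1}\|_{L^2}^2-\frac12\|u_h^{n}\|_{L^2}^2+\frac12\|u_h^{n+1}-u_h^{n}\|_{L^2}^2+\tau\|\nabla u_h^{n+1}\|_{L^2}^2=\tau(I_hf^{n+1},u_h^{n+1})+(g(u_h^n),u_h^{n+1})\,\bar{\Delta}W_{n+1},
\]
and multiply it by the weight $\|u_h^{n+1}\|_{L^2}^2+\frac12\|u_h^{n}\|_{L^2}^2$. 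Using the algebraic identity of the form \eqref{eq20180802_2} with the $H^1$-seminorm replaced by the $L^2$-norm, the leading product telescopes into $\frac38(\|u_h^{n+1}\|_{L^2}^4-\|u_h^{n}\|_{L^2}^4)$ plus the nonnegative square $\frac18(\|u_h^{n+1}\|_{L^2}^2-\|u_h^{n}\|_{L^2}^2)^2$, which is the gainable term that lets later bad terms be absorbed.

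The decisive step is the drift term. Here I would invoke the sharp bound \eqref{eq20180808_2}, which already yields
\[
\tau(I_hf^{n+1},u_h^{n+1})\le \tau\|u_h^{n+1}\|_{L^2}^2-\frac{\tau}{2}\|u_h^{n+1}\|_{L^{q+1}}^{q+1}+C\tau\sum_{K}h_K^{q+1-d\frac{q-1}{2}}\|\nabla u_h^{n+1}\|_{L^2(K)}^{q+1};
\]
under the mesh/exponent restrictions ($q$ odd in 2D, $q\in\{3,5\}$ in 3D) the mesh powers are bounded, so after multiplying by the weight and applying Young's inequality the resulting cross term splits into a pure $H^1$-seminorm power of $u_h^{n+1}$ and a $\tau$-weighted pure $L^2$-norm power. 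The former is controlled by Theorem~\ref{thm20180802_1} (this is precisely where the higher moment $H^1$ stability is indispensable), while the latter is Gronwall-absorbable. The stochastic contribution is treated as in \eqref{eq20180802_5}--\eqref{eq20180802_7}, with $\nabla P_hg$ replaced by $g$ and the $H^1$-seminorm replaced by the $L^2$-norm: writing $(g(u_h^n),u_h^{n+1})=(g(u_h^n),u_h^{n+1}-u_h^n)+(g(u_h^n),u_h^n)$, the genuine martingale increment has zero expectation, while the $(\bar{\Delta}W_{n+1})^2$ and $(\bar{\Delta}W_{n+1})^4$ remainders are estimated by the Cauchy--Schwarz inequality and \eqref{eq20180813_1} (to bound $\|g(u_h^n)\|_{L^2}^2$), so that the $\|u_h^{n+1}-u_h^n\|_{L^2}^2$ part is absorbed into the gained square and the diffusion part into $\E[\|u_h^{n}\|_{L^2}^4]$. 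Taking expectation, summing over $n$, and applying the discrete Gronwall inequality yields the fourth moment bound, with the second moment input supplied by Theorem~\ref{thm20180911}.

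For Steps~2 and~3 I would repeat the multiply-by-weight device: multiplying the fourth moment inequality by $\|u_h^{n+1}\|_{L^2}^4+\frac12\|u_h^{n}\|_{L^2}^4$ doubles the power and gives the $2^r$-th moment by induction on $r$, after which Young's inequality covers all real $p\ge 2$ as in \eqref{eq20180802_12}. The main obstacle throughout is the control of the drift-induced quantity $\|\nabla u_h^{n+1}\|_{L^2}^{q+1}$ and its weighted higher powers: these are genuinely $H^1$-seminorm quantities that cannot be bounded by the $L^2$-norm alone, so the argument relies fundamentally on the higher moment $H^1$-seminorm stability of Theorem~\ref{thm20180802_1} together with the mesh condition keeping the inverse-inequality exponent $q+1-d\frac{q-1}{2}$ nonnegative. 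Ensuring that every weighted cross term splits by Young's inequality into an $H^1$-power (handled by Theorem~\ref{thm20180802_1}) and a $\tau$-weighted $L^2$-power (handled by Gronwall) is the delicate bookkeeping that makes the estimate close.
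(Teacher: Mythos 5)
Your proposal is correct and follows essentially the same route as the paper's proof: testing \eqref{dfem} with $u_h^{n+1}$, invoking the drift bound \eqref{eq20180808_2}, multiplying by the weight $\|u_h^{n+1}\|_{L^2}^2+\tfrac12\|u_h^{n}\|_{L^2}^2$, splitting the noise term into a zero-mean martingale increment plus Cauchy--Schwarz remainders, absorbing the cross terms into the gained square $(\|u_h^{n+1}\|_{L^2}^2-\|u_h^{n}\|_{L^2}^2)^2$, controlling $\|\nabla u_h^{n+1}\|_{L^2}^{2(q+1)}$ via Theorem~\ref{thm20180802_1}, and closing with Gronwall before bootstrapping to $2^r$-th moments and interpolating by Young's inequality. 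This matches the paper's three-step argument \eqref{eq20180808_3}--\eqref{eq20180808_14} in both structure and all key estimates.
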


\begin{proof}
The proof is divided into three steps. In Step 1, we give the bound for $\E\| u^{\ell}_h\|_{L^2}^{4}$. 
In Step 2, we give the bound for $\E\| u^{\ell}_h\|_{L^2}^p$, where $p=2^r$ and $r$ is an arbitrary positive integer.
In Step 3, we give the bound for $\E\| u^{\ell}_h\|_{L^2}^p$, where $p$ is an arbitrary real number and $p\ge2$.

\smallskip
{\it Step 1.} Based on \eqref{eq20180711_10} and  \eqref{eq20180808_2}, we have
\begin{align}\label{eq20180808_3}
&\frac12\|u^{n+1}_h\|_{L^2}^2-\frac12\|u^{n}_h\|_{L^2}^2+\frac12\|u^{n+1}_h-u^{n}_h\|_{L^2}^2+\tau\|\nabla u_h^{n+1}\|_{L^2}^2+\frac{\tau}{2}\|u_h^{n+1}\|_{L^{q+1}}^{q+1}\\
&\qquad \le\tau\|u_h^{n+1}\|_{L^2}^2+C\tau\|\nabla u_h^{n+1}\|_{L^2}^{q+1}+(g(u^n_h), u_h^{n+1}) \, \bar{\Delta} W_{n+1}.\notag
\end{align}

Notice the following identity
\begin{align}\label{eq20180808_4}
\|u^{n+1}_h\|_{L^2}^2+\frac12\|u^{n}_h\|_{L^2}^2=&\frac34(\|u^{n+1}_h\|_{L^2}^2+\|u^n_h\|_{L^2}^2)+\frac14(\|u^{n+1}_h\|_{L^2}^2-\|u^n_h\|_{L^2}^2).
\end{align}
Multiplying \eqref{eq20180808_3} with $\|u^{n+1}_h\|_{L^2}^2+\frac12\|u^{n}_h\|_{L^2}^2$, we obtain
\begin{align}\label{eq20180808_5}
&\frac38(\|u^{n+1}_h\|_{L^2}^4-\|u^n_h\|_{L^2}^4)+\frac18(\|u^{n+1}_h\|_{L^2}^2-\|u^n_h\|_{L^2}^2)^2+(\frac12\|(u^{n+1}_h-u^{n}_h)\|_{L^2}^2\\
&\quad+\tau\|\nabla u_h^{n+1}\|_{L^2}^2+\frac{\tau}{2}\|u_h^{n+1}\|_{L^{q+1}}^{q+1})(\|u^{n+1}_h\|_{L^2}^2+\frac12\|u^{n}_h\|_{L^2}^2)\notag\\
&\le(\tau\|u_h^{n+1}\|_{L^2}^2+C\tau\|\nabla u_h^{n+1}\|_{L^2}^{q+1})(\|u^{n+1}_h\|_{L^2}^2+\frac12\|u^{n}_h\|_{L^2}^2)\notag\\
&\quad+(g(u^n_h), u_h^{n+1})\, \bar{\Delta} W_{n+1}(\|u^{n+1}_h\|_{L^2}^2+\frac12\|u^{n}_h\|_{L^2}^2).\notag
\end{align}

The first term on the right-hand side of \eqref{eq20180808_5} can be written as
\begin{align}\label{eq20180808_6}
&(\tau\|u_h^{n+1}\|_{L^2}^2+C\tau\|\nabla u_h^{n+1}\|_{L^2}^{q+1})(\|u^{n+1}_h\|_{L^2}^2+\frac12\|u^{n}_h\|_{L^2}^2)\\
&\le\tau\|u^{n+1}_h\|_{L^2}^2(\frac32\|u^{n+1}_h\|_{L^2}^2-\frac12(\|u^{n+1}_h\|_{L^2}^2-\|u^n_h\|_{L^2}^2))\notag\\
&\quad+C\tau\|\nabla u_h^{n+1}\|_{L^2}^{2(q+1)}+\tau\|u^{n+1}_h\|_{L^2}^4+\tau(\|u^{n+1}_h\|_{L^2}^2-\|u^n_h\|_{L^2}^2)^2\notag\\
&\le C\tau\|u^{n+1}_h\|_{L^2}^4+C\tau\|\nabla u_h^{n+1}\|_{L^2}^{2(q+1)}+\theta_1(\|u^{n+1}_h\|_{L^2}^2-\|u^n_h\|_{L^2}^2)^2\notag,
\end{align}
where $\theta_1>0$ will be determined later.

The second term on the right-hand side of \eqref{eq20180808_5} can be written as
\begin{align}\label{eq20180808_7}
&(g(u^n_h), u_h^{n+1})\, \bar{\Delta} W_{n+1}(\|u^{n+1}_h\|_{L^2}^2+\frac12\|u^{n}_h\|_{L^2}^2)\\
&=(g(u^n_h), u_h^{n+1}-u_h^n+u_h^n) \, \bar{\Delta} W_{n+1}(\|u^{n+1}_h\|_{L^2}^2+\frac12\|u^{n}_h\|_{L^2}^2)\notag\\
&\le(\frac14\|u_h^{n+1}-u_h^n\|_{L^2}^2+C(1+\|u_h^n\|_{L^2}^2)(\bar{\Delta} W_{n+1})^2\notag\\
&\quad+(g(u_h^n),u_h^n) \bar{\Delta} W_{n+1})(\|u^{n+1}_h\|_{L^2}^2+\frac12\|u^{n}_h\|_{L^2}^2)\notag.
\end{align}
For the second term on the right-hand side of \eqref{eq20180808_7}, using the Cauchy-Schwarz inequality, we get
\begin{align}\label{eq20180808_8}
&C(1+\|u_h^n\|_{L^2}^2)(\bar{\Delta} W_{n+1})^2(\|u^{n+1}_h\|_{L^2}^2+\frac12\|u^{n}_h\|_{L^2}^2)\\
&=C(1+\|u_h^n\|_{L^2}^2)(\bar{\Delta} W_{n+1})^2(\|u^{n+1}_h\|_{L^2}^2-\| u^{n}_h\|_{L^2}^2+\frac32\|u^{n}_h\|_{L^2}^2)\notag\\
&\le\theta_2\big(\|u^{n+1}_h\|_{L^2}^2-\|u^n_h\|_{L^2}^2)^2+(C+C\|u_h^n\|_{L^2}^4)(\bar{\Delta} W_{n+1})^4\notag\\
&\quad+C\|u_h^n\|_{L^2}^4(\bar{\Delta} W_{n+1} \big)^2+C\|u_h^n\|_{L^2}^2(\bar{\Delta} W_{n+1})^2,\notag
\end{align}
where $\theta_2>0$ will be determined later. 
Using \eqref{eq20180813_2}, the third term on the right-hand side of \eqref{eq20180808_7} can be bounded by
\begin{align}\label{eq20180808_9}
&(g(u_h^n),u_h^n) \bar{\Delta} W_{n+1}(\|u^{n+1}_h\|_{L^2}^2+\frac12\|u^{n}_h\|_{L^2}^2)\\
&\quad=(g(u_h^n),u_h^n) \bar{\Delta} W_{n+1}(\|u^{n+1}_h\|_{L^2}^2-\|u^{n}_h\|_{L^2}^2+\frac32\|u^{n}_h\|_{L^2}^2)\notag\\
&\quad\le\theta_3(\|u^{n+1}_h\|_{L^2}^2-\|u^n_h\|_{L^2}^2)^2+(C+C\|u_h^n\|_{L^2}^4)(\bar{\Delta} W_{n+1})^2\notag\\
&\qquad+\frac32(g(u_h^n),u_h^n)\|u_h^n\|_{L^2}^2\bar{\Delta} W_{n+1}\notag,
\end{align}
where $\theta_3>0$ will be determined later.

Choosing $\theta_1, \theta_2, \theta_3$ such that $\theta_1+\theta_2+\theta_3\le\frac{1}{16}$, then taking the summation over $n$ from $0$ to $\ell-1$ and taking the expectation on both sides of \eqref{eq20180808_5}, we obtain
\begin{align}\label{eq20180808_10}
&\frac38\E\left[\|u^{\ell}_h\|_{L^2}^4\right]+\frac{1}{16}\sum_{n=0}^{\ell-1}\E\left[(\|u^{n+1}_h\|_{L^2}^2-\|u^n_h\|_{L^2}^2)^2\right]+\sum_{n=0}^{\ell-1}\E\bigl[(\frac14\|(u^{n+1}_h-u^{n}_h)\|_{L^2}^2\\
&\quad+\tau\|\nabla u_h^{n+1}\|_{L^2}^2+\frac{\tau}{2}\|u_h^{n+1}\|_{L^{q+1}}^{q+1})(\|u^{n+1}_h\|_{L^2}^2+\frac12\|u^{n}_h\|_{L^2}^2)\bigr]\notag\\
&\le C\tau\sum_{n=0}^{\ell-1}\E\left[\|u^{n+1}_h\|_{L^2}^4\right]+C\tau\sum_{n=0}^{\ell-1}\E\left[\|\nabla u^{n+1}_h\|_{L^2}^{2(q+1)}\right]+\frac38\E\left[\|u^0_h\|_{L^2}^4\right]\notag\\
&\quad+C\tau\sum_{n=0}^{\ell-1}\E\left[\|u_h^n\|_{L^2}^4\right]+C.\notag
\end{align}

When $\tau\le C$, we have
\begin{align}\label{eq20180808_11}
&\frac14\E\left[\|u^{\ell}_h\|_{L^2}^4\right]+\frac{1}{16}\sum_{n=0}^{\ell-1}\E\left[(\|u^{n+1}_h\|_{L^2}^2-\|u^n_h\|_{L^2}^2)^2\right]+\sum_{n=0}^{\ell-1}\E\bigl[(\frac14\|(u^{n+1}_h-u^{n}_h)\|_{L^2}^2\\
&\quad+\tau\|\nabla u_h^{n+1}\|_{L^2}^2+\frac{\tau}{2}\|u_h^{n+1}\|_{L^4}^4)(\|u^{n+1}_h\|_{L^2}^2+\frac12\|u^{n}_h\|_{L^2}^2)\bigr]\notag\\
&\le C\tau\sum_{n=0}^{\ell-1}\E\left[\|u^{n}_h\|_{L^2}^4\right]+C\tau\sum_{n=0}^{\ell-1}\E\left[\|\nabla u^{n+1}_h\|_{L^2}^{2(q+1)}\right]+\frac38\E\left[\|u^0_h\|_{L^2}^4\right]+C\notag.
\end{align}

Using Gronwall's inequality, we obtain
\begin{align}\label{eq20180808_12}
&\frac14\E\left[\|u^{\ell}_h\|_{L^2}^4\right]+\frac{1}{16}\sum_{n=0}^{\ell-1}\E\left[(\|u^{n+1}_h\|_{L^2}^2-\|u^n_h\|_{L^2}^2)^2\right]+\sum_{n=0}^{\ell-1}\E\bigg[(\frac14\|(u^{n+1}_h-u^{n}_h)\|_{L^2}^2\\
&\quad+\tau\|\nabla u_h^{n+1}\|_{L^2}^2+\frac{\tau}{2}\|u_h^{n+1}\|_{L^4}^4)(\|u^{n+1}_h\|_{L^2}^2+\frac12\|u^{n}_h\|_{L^2}^2)\bigg]\le C.\notag
\end{align}

\smallskip
{\it Step 2.} Similar to Step 1, using \eqref{eq20180808_5}--\eqref{eq20180808_9}, we have
\begin{align}\label{eq20180808_13}
&\frac38(\|u^{n+1}_h\|_{L^2}^4-\|u^n_h\|_{L^2}^4)+\frac{1}{16}(\|u^{n+1}_h\|_{L^2}^2-\|u^n_h\|_{L^2}^2)^2\\
&+(\frac14\|(u^{n+1}_h-u^{n}_h)\|_{L^2}^2+\tau\|\nabla u_h^{n+1}\|_{L^2}^2+\frac{\tau}{2}\|u_h^{n+1}\|_{L^4}^4)(\|u^{n+1}_h\|_{L^2}^2+\frac12\|u^{n}_h\|_{L^2}^2)\notag\\
&\le C\tau\|u^{n+1}_h\|_{L^2}^4+C\tau\|\nabla u_h^{n+1}\|_{L^2}^{2(q+1)}+(C+C\|u_h^n\|_{L^2}^4)(\bar{\Delta} W_{n+1})^4\notag\\
&+(C+C\|u_h^n\|_{L^2}^4)(\bar{\Delta} W_{n+1})^2+(g(u_h^n),u_h^n)\|u_h^n\|_{L^2}^2\bar{\Delta} W_{n+1}.\notag
\end{align}
Similar to Step 1, multiplying \eqref{eq20180808_13} with $\|u^{n+1}_h\|_{L^2}^4+\frac12\|u^{n}_h\|_{L^2}^4$, 
we can obtain the 8-th moment of the $L^2$ stability result of the discrete solution. Then repeating this 
process, the $2^r$-th moment of the $L^2$ stability result of the discrete solution can be obtained.

\smallskip
{\it Step 3.} Suppose $2^{r-1}\le p\le 2^r$, then using Young's inequality, we have
\begin{align}\label{eq20180808_14}
\E\left[\|u^{\ell}_h\|_{L^2}^p\right]&\le \E\left[\|u^{\ell}_h\|_{L^2}^{2^r}\right]+C\\
&\le C\notag,
\end{align}
where the second inequality uses Step 2. The proof is complete. 
\end{proof}

\subsection{Error estimates}\label{subsec3}
Let $e^n=u(t_n)-u_h^n$ $(n = 0,1,2,\ldots,N)$. 
In the following theorem, the $L^2$ projection is used in the proof of the error estimates and 
the strong convergence rate is given. 

\begin{theorem}\label{thm:derrest} 
Let $u$ and $\{ u_h^n \}_{n=1}^N$ denote respectively the solutions of problem \eqref{var_solu} and  scheme \eqref{dfem}, then there holds
\begin{align*}
&\sup_{0 \leq n \leq N} \E \left[\| e^n \|^2_{L^2} \right] 
+ \E \left[\tau\sum_{n=1}^N  \|\nabla e^n\|^2_{L^2}  \right]\le C\tau +Ch^2|\ln h|^2.
\end{align*} 
\end{theorem}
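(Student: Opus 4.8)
The plan is to derive the error equation by subtracting the scheme \eqref{dfem} from a variational form of the continuous problem \eqref{var_solu} evaluated at consecutive time levels, then split the error using the $L^2$-projection $P_h$. Specifically, I would write $e^n = u(t_n) - u_h^n = \bigl(u(t_n) - P_h u(t_n)\bigr) + \bigl(P_h u(t_n) - u_h^n\bigr) =: \rho^n + \theta^n$. The projection part $\rho^n$ is controlled purely by the approximation estimates \eqref{Ph1}--\eqref{Ph2} together with the regularity bound \eqref{eq20190919_1}, contributing the $h^2$ term (the $|\ln h|^2$ factor will come from the $L^\infty$-type estimate \eqref{Ph2} needed to handle the nonlinear drift). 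The main work is therefore to estimate $\theta^n \in V_h$, for which I would test the error equation with $\theta^{n+1}$ (or equivalently apply It\^o-type/discrete energy arguments to $\|\theta^{n+1}\|_{L^2}^2$).

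First I would establish the one-step error relation satisfied by $\theta^{n+1}$ by integrating the continuous equation over $[t_n, t_{n+1}]$ and comparing with the discrete update, producing consistency terms of three types: a time-discretization residual for the Laplacian term $\int_{t_n}^{t_{n+1}} \nabla(u(\zeta)-u(t_{n+1}))\,d\zeta$, a drift residual involving $f(u(\zeta)) - I_h f(u_h^{n+1})$, and a diffusion/stochastic residual $\int_{t_n}^{t_{n+1}} (g(u(\zeta)) - g(u_h^n))\,dW(\zeta)$. The H\"older continuity Lemmas \ref{lem:e3} and \ref{lem:e4} are precisely what I would invoke to bound the time-increment residuals by $C\tau$ after summation: Lemma \ref{lem:e3} controls the $\nabla u$ increment and Lemma \ref{lem:e4} controls the $f(u)$ increment, each yielding the $O(\tau)$ rate. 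The drift term is where the structure matters most; I would split $f(u(\zeta)) - I_h f(u_h^{n+1})$ as $\bigl(f(u(\zeta)) - f(u(t_{n+1}))\bigr) + \bigl(f(u(t_{n+1})) - f(u_h^{n+1})\bigr) + \bigl(f(u_h^{n+1}) - I_h f(u_h^{n+1})\bigr)$, handling the first via Lemma \ref{lem:e4}, the interpolation defect via \eqref{eq20180807_7}, and the middle genuine nonlinear difference via the one-sided Lipschitz condition \eqref{eq20180918_1} applied pointwise to $f(u(t_{n+1})) - f(u_h^{n+1})$ tested against $e^{n+1}$.

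The hard part will be controlling the nonlinear drift difference without losing the optimal rate, since the one-sided Lipschitz bound \eqref{eq20180918_1} only gives $(\theta^{n+1}, f(u(t_{n+1})) - f(u_h^{n+1})) \le \mu\|\theta^{n+1}\|_{L^2}^2 + (\text{cross terms with }\rho)$, and the cross terms couple $\rho^{n+1}$ with high powers of the solution. This is exactly where the higher-moment stability estimates from Theorems \ref{thm20180802_1} and \ref{thm20180808_1} become indispensable: after applying the Cauchy--Schwarz and H\"older inequalities to the cross terms, one is left with factors such as $\E[\|u_h^{n+1}\|_{L^\infty}^{2q-2}\|\rho^{n+1}\|_{L^2}^2]$, which must be bounded by splitting the expectation and invoking the uniform $p$-th moment bounds on $\|\nabla u_h^{n+1}\|_{L^2}$ and $\|u_h^{n+1}\|_{L^2}$ (combined with inverse/Sobolev embedding to reach the $L^\infty$ norm, producing the logarithmic factor $|\ln h|$). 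The diffusion term contributes the $O(\tau)$ stochastic consistency error through It\^o isometry together with the global Lipschitz property \eqref{eq20180812_2} applied to $g(u(\zeta)) - g(u_h^n)$, which reproduces $\|\theta^n\|_{L^2}^2$ plus an $O(\tau)$ residual absorbable by Lemma \ref{lem:e3}-type bounds.

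Finally I would take expectations, use that the martingale increment has zero mean to kill the leading stochastic term, sum over $n$ from $0$ to $\ell-1$, collect the $C\tau$ and $Ch^2|\ln h|^2$ contributions, and apply the discrete Gronwall inequality to the resulting recursion $\E[\|\theta^{\ell}\|_{L^2}^2] + \tau\sum \E[\|\nabla\theta^{n+1}\|_{L^2}^2] \le C\tau\sum_{n} \E[\|\theta^n\|_{L^2}^2] + C\tau + Ch^2|\ln h|^2$. Combining the $\theta$ estimate with the projection estimate for $\rho$ via the triangle inequality then yields the stated bound for $e^n$.
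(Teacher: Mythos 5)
Your proposal follows essentially the same route as the paper: the $L^2$-projection splitting $e^n=\rho^n+\theta^n$, testing with $\theta^{n+1}$, the H\"older-continuity Lemmas \ref{lem:e3} and \ref{lem:e4} for the time residuals, the one-sided Lipschitz condition for the genuine nonlinear difference, the higher-moment stability results plus inverse/discrete-Sobolev estimates for the interpolation defect $f^{n+1}-I_hf^{n+1}$ (the source of the $|\ln h|^2$ factor), It\^o isometry with the global Lipschitz bound for the diffusion term, and a discrete Gronwall argument. The only minor variation is that the paper inserts the extra intermediate $f(P_hu(t_{n+1}))$ into the drift splitting so that the one-sided Lipschitz condition pairs directly with the test function $\xi^{n+1}$ and yields $\mu\|\xi^{n+1}\|_{L^2}^2$ with no cross term, whereas your pairing with $e^{n+1}$ leaves a cross term against $\rho^{n+1}$ that you correctly plan to absorb using the moment bounds.
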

\begin{proof}
We write $e^n = \eta^n + \xi^n$ where
\begin{align*}
\eta^n := u(t_n) - P_h u(t_n) \quad \text{and} \quad \xi^n := P_h u(t_n) - u_h^n,  \quad n = 0,1,2,...,N.
\end{align*}
It follows from \eqref{var_solu} that for all $t_n$ ($n \geq 0$) there holds $\P$-almost surely
\begin{align}
\label{derrest:1}
&\bigl(u(t_{n+1}), v_h) - (u(t_n), v_h\bigr)+\int_{t_n}^{t_{n+1}} \bigl(\nabla u(s), \nabla v_h\bigr) \, ds \\
&\quad  =\int_{t_n}^{t_{n+1}} \bigl(f(u(s)), v_h\bigr) \, ds+\int_{t_n}^{t_{n+1}} \bigl( g(u(s)), v_h\bigr) \, dW(s) 
\qquad\qquad  \forall \, v_h \in V_h. \notag
\end{align}

Subtracting \eqref{dfem} from \eqref{derrest:1} and setting $v_h = \xi^{n+1}$, the following error equation holds $\P$-almost surely,
\begin{align} \label{derrest:3}
(\xi^{n+1} - \xi^n, \xi^{n+1}) &= -(\eta^{n+1} - \eta^n, \xi^{n+1}) 
- \int_{t_n}^{t_{n+1}} \bigl(\nabla u(s) - \nabla u_h^{n+1}, \nabla \xi^{n+1} \bigr) \, ds \\
&\qquad +\int_{t_n}^{t_{n+1}} \bigl(f(u(s)) - I_hf^{n+1}, \xi^{n+1} \bigr) \, ds \notag \\
   &\qquad +  \int_{t_n}^{t_{n+1}} \bigl(( g(u(s)) -  g(u_h^n)) , \xi^{n+1} \bigr) \, dW(s), \notag \\
   & := T_1 + T_2 + T_3 + T_4. \notag 
\end{align}

The left-hand side of \eqref{derrest:3} can be handled by
\begin{align} \label{derrest:4}
 \E \bigl[(\xi^{n+1} - \xi^n, \xi^{n+1}) \bigr] 
&= \frac{1}{2} \E \bigl[ \|\xi^{n+1}\|_{L^2}^2 - \|\xi^{n}\|_{L^2}^2 \bigr] \\
&\quad +  \frac{1}{2} \E \bigl[ \| \xi^{n+1} - \xi^n \|^2_{L^2} \bigr]. \notag
\end{align}

Next, we bound the right-hand side of \eqref{derrest:3}. First, since $P_h$ is the 
$L^2$-projection operator, we have $\E \left[ T_1 \right] = 0$.

For the second term on the right-hand side of \eqref{derrest:3}, using the H\"{o}lder continuity in Lemma \ref{lem:e3}, we have
\begin{align} \label{derrest:6}
\E \left[ T_2 \right] &= - \E \left[\int_{t_n}^{t_{n+1}} (\nabla u(s) - \nabla u(t_{n+1}), \nabla \xi^{n+1}) \, ds \right] \\
&\quad 
- \E\left[\int_{t_n}^{t_{n+1}} (\nabla \eta^{n+1} + \nabla \xi^{n+1}, \nabla \xi^{n+1}) \, ds \right] \notag \\
&\leq C \E \left[\int_{t_n}^{t_{n+1}} \| \nabla u(s) - \nabla u(t_{n+1}) \|_{L^2}^2 \, ds \right] 
 \notag \\
&\quad - \frac{3}{4} \E \left[ \| \nabla \xi^{n+1} \|^2_{L^2} \right] \tau+ C\E \left[\| \nabla \eta^{n+1} \|^2_{L^2} \right] \tau \notag\\
&\leq C \tau^2 + C\E \left[ \| \nabla \eta^{n+1} \|^2_{L^2} \right] \tau 
- \frac{3}{4} \E \left[\| \nabla \xi^{n+1} \|^2_{L^2} \right] \tau. \notag
\end{align}

In order to estimate the third term on the right-hand side of \eqref{derrest:3}, we write 
\begin{align}
\label{derrest:7}
\bigl( f(u(s)) - I_hf^{n+1}, \xi^{n+1} \bigr) &= \bigl( f(u(s)) - f(u(t_{n+1})), \xi^{n+1} \bigr) \\
   &\quad + \bigl( f(u(t_{n+1}) - f(P_h u(t_{n+1})), \xi^{n+1} \bigr) \notag \\
   &\quad +  \bigl( f(P_h u(t_{n+1})) - f^{n+1}, \xi^{n+1} \bigr) \notag\\
    &\quad +  \bigl( f^{n+1}-I_hf^{n+1}, \xi^{n+1} \bigr) \notag.
\end{align}

Using the H\"{o}lder continuity in Lemma \ref{lem:e4}, we obtain
\begin{align}
\label{derrest:8}
&\E \left[\bigl( f(u(s)) - f(u(t_{n+1})), \xi^{n+1} \bigr) \right] \\
&\qquad\leq C \E \left[\| f(u(s)) - f(u(t_{n+1})) \|^2_{L^2} \right]  
+ \E \left[\|\xi^{n+1}\|_{L^2}^2 \right] \notag \\
&\qquad \leq C \tau 
+ \E \left[\|\xi^{n+1}\|_{L^2}^2 \right]. \notag
\end{align}

Next, using properties of the projection, we have
\begin{align}
\label{derrest:9}
& \E \left[\bigl( f(u(t_{n+1}) - f(P_h u(t_{n+1})), \xi^{n+1} \bigr) \right] \\
&\quad =  -\E \bigl[ \bigl( \eta^{n+1} ((u(t_{n+1}))^{q-1}+(u(t_{n+1}))^{q-2}P_h u(t_{n+1})\notag\\
&\qquad+\cdots+P_h u(t_{n+1})^{q-1}-1),\xi^{n+1} \bigr) \bigr] \notag\\
&\quad \leq C \E \Bigl[ \|(u(t_{n+1}))^{q-1}+(u(t_{n+1}))^{q-2}P_h u(t_{n+1}) \notag \\
&\qquad +\cdots+P_h u(t_{n+1})^{q-1}-1\|_{L^{\infty}}^2\times \|\eta^{n+1}\|_{L^2}^2 \Bigr] + \E \left[ \|\xi^{n+1}\|_{L^2}^2 \right] \notag\\
&\quad \leq C\Bigl( \E \left[\left(\|P_h u(t_{n+1})\|_{L^\infty}^{q}
+\|u(t_{n+1})\|_{L^\infty}^{q}+|D|^{\frac{q}{q-1}}\right) \right] \Bigr)^{\frac{q-1}{q}}  \notag \\
&\qquad \times \Bigl( \E \left[\|\eta^{n+1}\|_{L^2}^{2q} \right] \Bigr)^{\frac1q} 
+ \E \left[ \|\xi^{n+1}\|_{L^2}^2 \right] \notag\\
&\quad\leq C\left( \E \left[\|\eta^{n+1}\|_{L^2}^{2q} \right] \right)^{\frac1q} 
+ \E \left[ \|\xi^{n+1}\|_{L^2}^2 \right].\notag
\end{align}

The third term on the right-hand 
side of \eqref{derrest:7} can be bounded by  
\begin{align}
\label{derrest:9_1}
\E \left[\bigl( f(P_h u(t_{n+1})) - f^{n+1}, \xi^{n+1} \bigr) \right] \leq \E \left[\| \xi^{n+1} \|^2_{L^2} \right].
\end{align}

Using Theorem \ref{thm20180802_1}, properties of the interpolation operator, the inverse inequality, and the fact that $u_h^{n+1}$ is a piecewise linear polynomial, the fourth term on the right-hand side of \eqref{derrest:7} can be handled by
\begin{align}\label{eq20180711_17}
&\E \left[\bigl( f^{n+1}-I_hf^{n+1}, \xi^{n+1} \bigr) \right]\\
&\quad\le \E \left[Ch^2\sum_{K\in\mathcal{T}_h}\|q(u_h^{n+1})^{q-1}\nabla u_h^{n+1}\|^2_{L^2(K)} \right]+\E \left[\| \xi^{n+1} \|^2_{L^2} \right]\notag\\
&\quad\le \E \left[Ch^2\sum_{K\in\mathcal{T}_h}\left(\| u_h^{n+1}\|_{L^\infty(K)}^{2(q-1)}\|\nabla u_h^{n+1} \|^2_{L^2(K)}\right) \right]+\E \left[\| \xi^{n+1} \|^2_{L^2} \right]\notag\\
&\quad\le \E \left[Ch^2|\ln h|^2\sum_{K\in\mathcal{T}_h}\left((\|\nabla u_h^{n+1} \|^{2(q-1)}_{L^2(K)}+\|u_h^{n+1} \|^{2(q-1)}_{L^2(K)})\|\nabla u_h^{n+1} \|^2_{L^2(K)}\right) \right]\notag\\
&\qquad \quad+\E \left[\| \xi^{n+1} \|^2_{L^2} \right]\notag\\
&\quad\le \E \left[Ch^2|\ln h|^2\sum_{K\in\mathcal{T}_h}\left(\|\nabla u_h^{n+1} \|^{2q}_{L^2(K)}+\|u_h^{n+1} \|^{2q}_{L^2(K)}\right) \right]+\E \left[\| \xi^{n+1} \|^2_{L^2} \right]\notag\\
&\quad\le \E \left[Ch^2|\ln h|^2(\|u_h^{n+1} \|^{2q}_{L^2}+\|\nabla u_h^{n+1} \|^{2q}_{L^2}) \right]+\E \left[\| \xi^{n+1} \|^2_{L^2} \right]\notag\\
&\quad\le Ch^2|\ln h|^2+\E \left[\| \xi^{n+1} \|^2_{L^2} \right]\notag.
\end{align}

Combining \eqref{derrest:8}--\eqref{eq20180711_17} to obtain
\begin{align} \label{derrest:11}
\E \left[ T_3 \right] &\leq C \tau^{2} 
+Ch^2|\ln h|^2\tau+C \E \left[\|\xi^{n+1}\|_{L^2}^2 \right] \tau\\
&\quad+ C\left( \E \left[\|\eta^{n+1}\|_{L^2}^{2q} \right] \right)^{\frac1q} \tau.\notag
\end{align}

By the martingale property, the It\^{o} isometry, the H\"{o}lder continuity of $u$ and the global Lipschitz condition \eqref{eq20180812_2}, we have
\begin{align}\label{derrest:15}
\E [T_4] &\leq \frac{1}{2} \E \left[\|\xi^{n+1} - \xi^n\|^2_{L^2} \right]+ \frac{1}{2} \E \left[\int_{t_n}^{t_{n+1}} 
\|g(u(s)) -  g(u_h^n)\|^2_{L^2} \, ds \right]  \\
&\leq \frac{1}{2} \E \left[\|\xi^{n+1} - \xi^n\|^2_{L^2} \right]+ C \E \left[\int_{t_n}^{t_{n+1}} 
\|u(s) -  u_h^n\|^2_{L^2} \, ds \right] \notag \\
   &\leq \frac{1}{2} \E \left[\|\xi^{n+1} - \xi^n\|^2_{L^2} \right] + C \E \left[\int_{t_n}^{t_{n+1}} 
\|  u(s) -  u(t_n)\|^2_{L^2} \, ds \right] \notag \\
   &\quad + C \E \left[\| \eta^n +  \xi^n\|^2_{L^2} \right] \tau \notag \\
   &\leq \frac{1}{2} \E \left[\|\xi^{n+1} - \xi^n\|^2_{L^2} \right] 
+ C \tau^2  + C \E \left[\|  \eta^n \|^2_{L^2} \right] \tau\notag \\
   &\quad  + C\E \left[\|  \xi^{n} \|^2_{L^2} \right] \tau.\notag
\end{align}

Taking the expectation on \eqref{derrest:3} and combining estimates \eqref{derrest:4}--\eqref{derrest:15}, summing over 
$n = 0, 1, 2, ..., \ell-1$ with $1 \leq \ell \leq N$, and using the properties of the $L^2$ projection 
and the regularity assumption, we obtain
\begin{align} \label{derrest:17}
\frac14\E \left[\| \xi^{\ell} \|^2_{L^2} \right] 
&+ \frac{1}{4} \E \left[\tau \sum_{n = 1}^{\ell} \| \nabla \xi^n \|^2_{L^2} \right] \\
   & \leq \frac{1}{2} \E \left[ \| \xi^0 \|^2_{L^2} \right] + C\E \left[ \tau \sum_{n=0}^{\ell-1} \| \xi^n \|^2_{L^2} \right]+ C\tau +Ch^2|\ln h|^2.\notag
\end{align}
 
Finally, the assertion of the theorem follows from \eqref{derrest:17}, 
the discrete Gronwall's inequality, the $L^2$-projection properties, the fact that $\xi^0 = 0$ 
and the triangle inequality. The proof is complete. 
\end{proof}

The following strong stability result is a direct corollary of Theorem \ref{thm:derrest}.
\begin{corollary}\label{cor20180803_1}
Suppose the mesh assumption in \eqref{eq20180907} holds and $h^2|\ln h|^2\le C\tau$, then 
\begin{align*}
\E\left[\sup_{0 \leq n \leq N}(\nabla \xi^{n}, \nabla \xi^{n})\right]\le C.
\end{align*}
\end{corollary}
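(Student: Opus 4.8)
The plan is to exploit the elementary inequality $\sup_{0\le n\le N} a_n \le \sum_{n=0}^N a_n$, valid for nonnegative reals, which converts the quantity $\E[\sup_n \|\nabla\xi^n\|_{L^2}^2]$ into a \emph{summed} $H^1$-seminorm estimate. Such a summed bound is precisely what Theorem \ref{thm:derrest} supplies for the full error $e^n$, so the corollary will follow once the projection part is treated and the mesh--time relation $h^2|\ln h|^2\le C\tau$ is invoked to absorb the (non-decaying) constant.

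First I would note that $\xi^0 = P_h u_0 - u_h^0 = 0$, so only indices $n\ge 1$ contribute. Writing $\xi^n = e^n - \eta^n$ with $\eta^n = u(t_n)-P_hu(t_n)$ and applying $\sup_n\le\sum_n$, I would estimate
\begin{align*}
\E\Bigl[\sup_{0\le n\le N}\|\nabla\xi^n\|_{L^2}^2\Bigr]
\le 2\,\E\Bigl[\sum_{n=1}^N\|\nabla e^n\|_{L^2}^2\Bigr]
+2\,\E\Bigl[\sum_{n=1}^N\|\nabla\eta^n\|_{L^2}^2\Bigr].
\end{align*}
For the first sum, Theorem \ref{thm:derrest} gives $\E[\tau\sum_{n=1}^N\|\nabla e^n\|_{L^2}^2]\le C\tau + Ch^2|\ln h|^2$; under the hypothesis $h^2|\ln h|^2\le C\tau$ the right-hand side is $\le C\tau$, so dividing by $\tau$ yields $\E[\sum_{n=1}^N\|\nabla e^n\|_{L^2}^2]\le C$. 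For the second sum, the projection estimate \eqref{Ph1} with $s=2$ gives $\|\nabla\eta^n\|_{L^2}^2\le Ch^2\|u(t_n)\|_{H^2}^2$, while $H^2$ (elliptic) regularity for the Neumann problem together with the a priori bound \eqref{eq20190919_1} yields $\sup_{t\in[0,T]}\E[\|u(t)\|_{H^2}^2]\le C$. Hence $\E[\sum_{n=1}^N\|\nabla\eta^n\|_{L^2}^2]\le Ch^2 N\le Ch^2/\tau$, and since $h^2\le h^2|\ln h|^2\le C\tau$ this too is bounded by $C$. Combining the two bounds proves the claim.

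The main point here is conceptual rather than an analytic obstacle: the estimate does not decay in $\tau$ or $h$, and one should not expect it to, since each summand $\|\nabla e^n\|_{L^2}^2$ (resp.\ $\|\nabla\eta^n\|_{L^2}^2$) is of size $O(\tau)$ on average while there are $N=O(\tau^{-1})$ of them, so replacing $\sup$ by $\sum$ discards the decay but retains an $O(1)$ bound, which is all the corollary asserts. The hypothesis $h^2|\ln h|^2\le C\tau$ is exactly what keeps both the discretization error from Theorem \ref{thm:derrest} and the accumulated projection error $h^2 N$ at size $O(1)$; without it the contribution $h^2/\tau$ could blow up. I would finally remark that if one instead wanted $\E[\sup_n\|\nabla\xi^n\|_{L^2}^2]$ bounded \emph{independently} of the mesh--time relation, the crude $\sup\le\sum$ step would not suffice: one would have to return to the $H^1$ energy identity by testing the error equation with $-\Delta_h\xi^{n+1}$, keep the stochastic increments $\bar\Delta W_{n+1}$ unexpanded, and apply a discrete Burkholder--Davis--Gundy/Doob maximal inequality to the resulting martingale. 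This sharper route is not needed for the stated result.
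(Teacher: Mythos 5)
Your proof is correct and follows essentially the same route as the paper: bound the supremum by the sum over $n$, invoke the summed $H^1$ error estimate from Theorem \ref{thm:derrest}, and use $h^2|\ln h|^2\le C\tau$ to keep the resulting constant of size $O(1)$. The only cosmetic difference is that the paper applies the summed bound directly to $\xi^n$ (which is what the intermediate estimate \eqref{derrest:17} in the proof of Theorem \ref{thm:derrest} actually controls), whereas you recover it from the stated bound on $e^n$ by splitting off the projection error $\eta^n$ and checking $\E\bigl[\sum_{n}\|\nabla\eta^n\|_{L^2}^2\bigr]\le Ch^2N\le C$ --- an extra but harmless step.
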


\begin{proof}
For each sample point,
\begin{align}\label{eq20180803_1}
\sup_{0 \leq n \leq N}(\nabla \xi^{n}, \nabla \xi^{n})\le\sum_{n=0}^{N}(\nabla \xi^{n}, \nabla \xi^{n}).
\end{align}
When $h^2|\ln h|^2\le C\tau$, taking the expectation on both sides of \eqref{eq20180803_1}, and using Theorem \ref{thm:derrest}, we obtain
\begin{align*}
\E\left[\sup_{0 \leq n \leq N}(\nabla \xi^{n}, \nabla \xi^{n})\right]\le C+C\frac{h^2|\ln h|^2}{\tau}\le C.
\end{align*}
\end{proof}

\begin{remark}
(a) Notice the elliptic projection cannot be used due to the first term $T_1$ in \eqref{derrest:3}. In reference \cite{majee2018optimal}, it is $C\tau +Ch^2$ since $L^2$ projection is used there.
 
(b) For the diffusion term, We need $g(u)\in C^1$ and $g(u)$ to be Lipschitz continuous, which are the 
same assumptions as in stochastic ODE case \cite{higham2002strong}. The analysis in \cite{majee2018optimal} requires two extra conditions: $g(u)$ 
and $g''(u)$ are bounded. Notice $g(u)=u$, $g(u)=\sqrt{u^2+1}$ or some others satisfy the assumptions 
in this paper, but they do not satisfy the assumptions in \cite{majee2018optimal}.
\end{remark}

\section{Numerical experiments}\label{sec-4}
In this section, we present several two dimensional numerical examples to gauge the performance of the 
proposed stochastic finite element scheme for the stochastic partial differential equations satisfying 
the proposed assumptions for the nonlinear term and the diffusion term. Test 1 is designed to demonstrate 
the error orders with respect to mesh size $h$ for small and big noises; Test 2 is designed to demonstrate 
the stability results and evolution of the stochastic Allen-Cahn equation, which is a 
special case of the SPDE in this paper; Test 3 is designed to demonstrate the stability results of the 
SPDE with a different initial condition; Test 4 is designed to demonstrate the stability results of the 
SPDE with a different nonlinear term; Test 5 is designed to demonstrate the stability results of the SPDE 
with a different diffusion term. The square domain $\Omega = [-1,1]^2$, and 500 sample points are used in 
these tests.

\smallskip
\paragraph{\bf Test 1}
Consider the following smooth initial condition 
\begin{align}\label{test1_init}
u_0(x,y) =\tanh\Big( \frac{x^2 + y^2 - 0.6^2}{\sqrt{2}\epsilon}\Big),
\end{align}
where $\epsilon = 0.2$. Time step size $\tau = 1\times 10^{-6}$ is used in this Test 1.

In this test, the nonlinear term $f(u)=u-u^3$, and the diffusion term $g(u)=\delta\,u$. Table \ref{tab1} shows the following three types of errors $\bigl\{\sup\limits_{0 \leq n \leq N} \E \bigl[ \| e^n \|^2_{L^2(\D)} \bigr]\bigr\}^{\frac12}$, $\bigl\{\E\bigl[\sup\limits_{0 \leq n \leq N} \| e^n \|^2_{L^2(\D)} \bigr]\bigr\}^{\frac12}$, and $\bigl\{\E \bigl[ \sum_{n=1}^N \tau \|\nabla e^n\|^2_{L^2(\D)}\bigr]\bigr\}^{\frac12}$ respectively, and the rates of convergence. 
     The noise intensity $\delta=1$. In the table, we use $L^\infty\E L^2$, $\E L^\infty L^2$ and $\E L^2H^1$ to denote these three types of errors respectively.

\begin{table}[H]
\centering 
\footnotesize
\begin{tabular}{|l||c|c||c|c||c|c|}
\hline  
& $L^\infty\E L^2$ error & order &   
$\E L^\infty L^2$ error & order &   
$\E L^2H^1$ error & order \\ \hline    
$h=0.5\sqrt{2}$ & 0.2909 & --- &   
0.2900 & --- &   
2.2387 & --- \\ \hline    
$h=0.25\sqrt{2}$ & 0.0759 & 1.9384 &   
0.0757 & 1.9377 &   
1.1401 & 0.9735\\ \hline    
$h=0.125\sqrt{2}$ & 0.0201 & 1.9169 &   
0.0201 & 1.9131&   
0.5919 & 0.9457\\ \hline    
$h=0.0625\sqrt{2}$ & 0.0051  & 1.9786 &   
0.0051 & 1.9786&   
0.2996 &  0.9823\\ \hline    
\end{tabular}
\caption{Spatial errors and convergence rates of Test 1: $\epsilon =
  0.2$, $\tau = 1\times 10^{-6}$, $\delta = 1$.} \label{tab1}
\end{table}

Table \ref{tab2} shows the errors $L^\infty\E L^2$, $\E L^\infty L^2$ and $\E L^2H^1$ respectively, and the rates of convergence at final time $T = 2\time10^{-5}$. The noise intensity $\delta=50$.
\begin{table}[H]
\centering 
\footnotesize
\begin{tabular}{|l||c|c||c|c||c|c|}
\hline  
& $L^\infty\E L^2$ error & order &   
$\E L^\infty L^2$ error & order &   
$\E L^2H^1$ error & order \\ \hline    
$h=0.5\sqrt{2}$ & 0.3401 & --- &   
0.2995 & --- &   
2.2708 & --- \\ \hline    
$h=0.25\sqrt{2}$ & 0.0887 & 1.9390 &   
0.0782 & 1.9373 &   
1.1565 & 0.9734\\ \hline    
$h=0.125\sqrt{2}$ & 0.0236 & 1.9101 &   
0.0207 & 1.9175&   
0.6004 & 0.9458\\ \hline    
$h=0.0625\sqrt{2}$ & 0.0060  & 1.9758 &   
0.0053 & 1.9656&   
0.3039 &  0.9823\\ \hline    
\end{tabular}
\caption{Spatial errors and convergence rates of Test 1: $\epsilon =
  0.2$, $\tau = 1\times 10^{-6}$, $\delta = 50$.} \label{tab2}
\end{table}

From these two tables, we observe that the error orders of $L^\infty\E L^2$ and $\E L^\infty L^2$ are 2, and the error order of $\E L^2H^1$ is 1. Besides, the error orders keep the same when the noise intensity increases.

In the following tests, $\E L^2$ and $\E H^1$ are used to denote $\E\|u^{n}_h\|_{L^2}^2$ and $\E \|\nabla u^{n}_h \|^2_{L^2}$ respectively. 

\smallskip
\paragraph{\bf Test 2}\label{test2}
Consider the following initial condition 
\begin{align}\label{test2_init}
u_0(x,y) =\tanh\Big( \frac{\sqrt{x^2 + y^2} - 0.6}{\sqrt{2}\epsilon}\Big).
\end{align}

In this test, the nonlinear term $f(u)=u-u^3$, and the diffusion term $g(u)=\delta\,u$, which corresponds to the stochastic Allen-Cahn equation. More tests related to the Allen-Cahn equation can be found in \cite{feng2014analysis, feng2017finite, li2015numerical, xu2016stability}. Figure \ref{add_evo} shows the evolution of the zero-level sets of the solutions under different intensity of the noise. 
We observe that although the circle may shrink or dilate (depending on the sign of the diffusion term), the average zero-level sets shrink for smaller and bigger noises. Figure \ref{fig2} shows the $\E L^2$ and $\E H^1$ stability results at each time step, which verifies the results in Theorems \ref{thm20180711_1} and \ref{thm20180911}. We also observe that they are both bounded. 
\begin{figure}[!htbp]
\centering 
\captionsetup{justification=centering}
\subfloat[$\delta=0.1$]{
\includegraphics[width=0.40\textwidth]{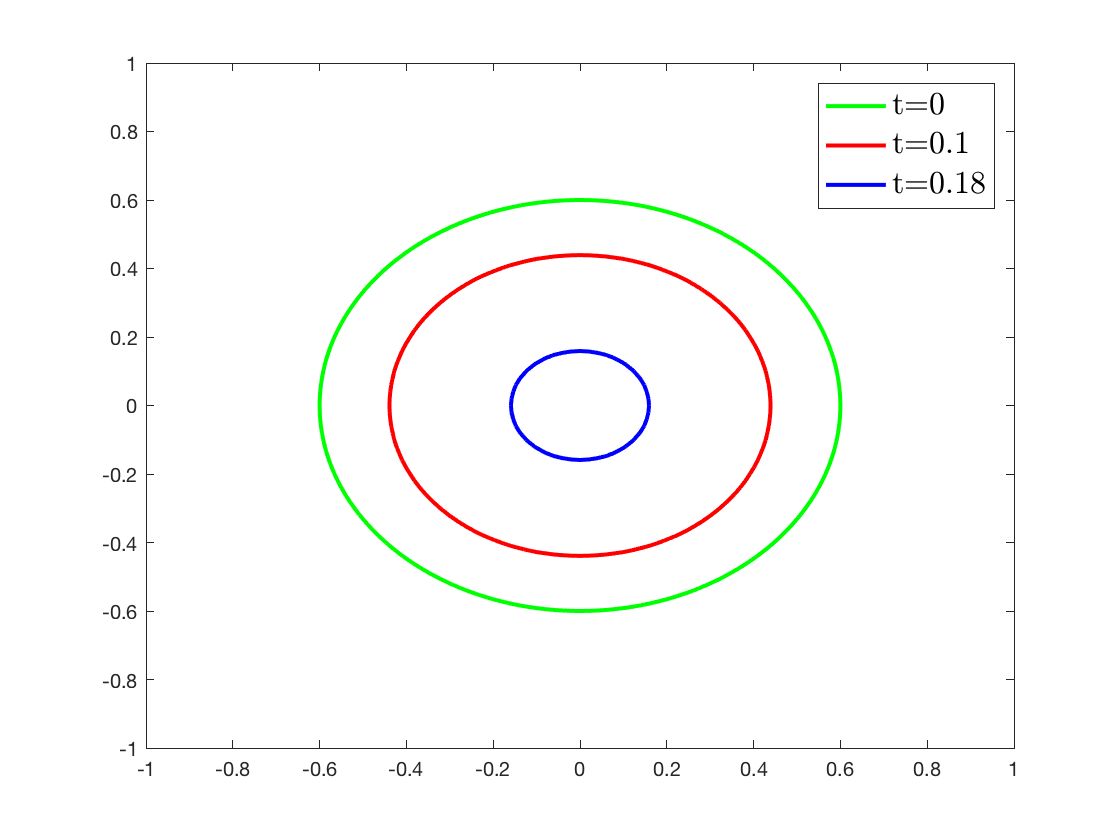}
}%
\subfloat[$\delta=1$]{
\includegraphics[width=0.40\textwidth]{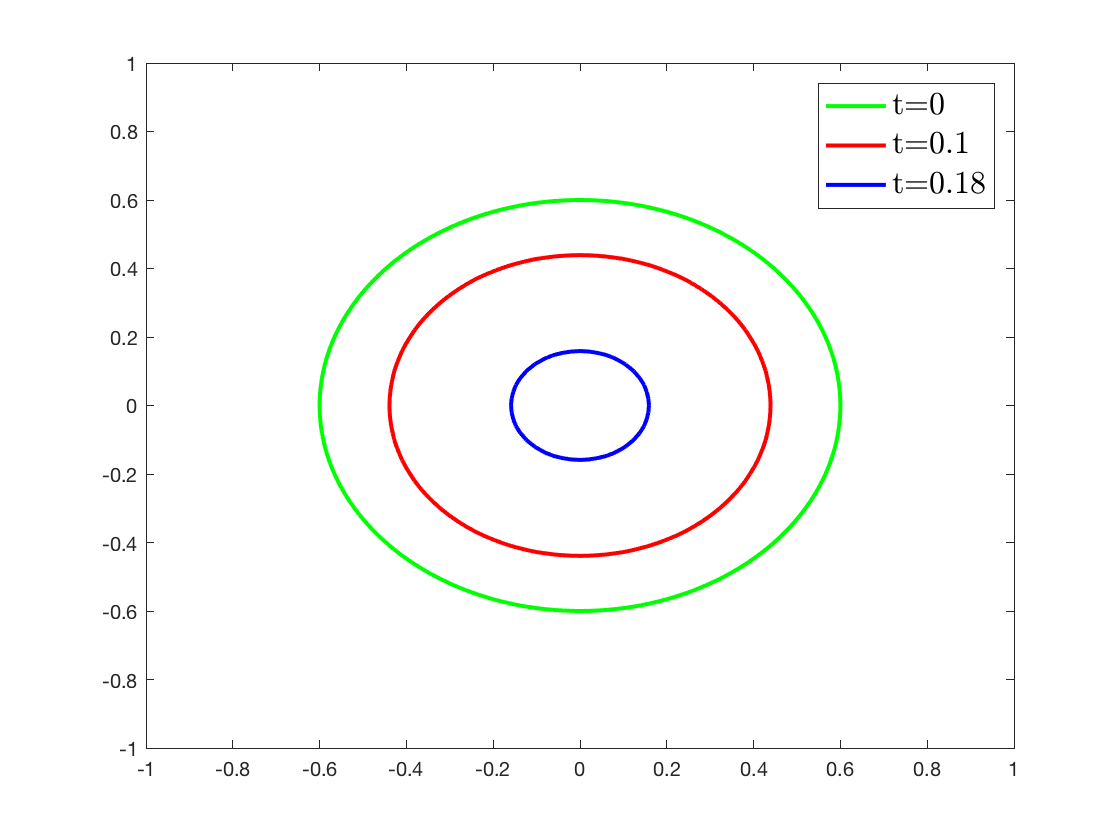}
}
\caption{Zero level sets of the solutions: $\tau=5\times10^{-4},\ h=0.02,\ \epsilon = 0.04$.} 
\label{add_evo}
\end{figure}

\begin{figure}[!htbp]
\centering 
\captionsetup{justification=centering}
\subfloat[$\delta=0.1$]{
\includegraphics[width=0.40\textwidth]{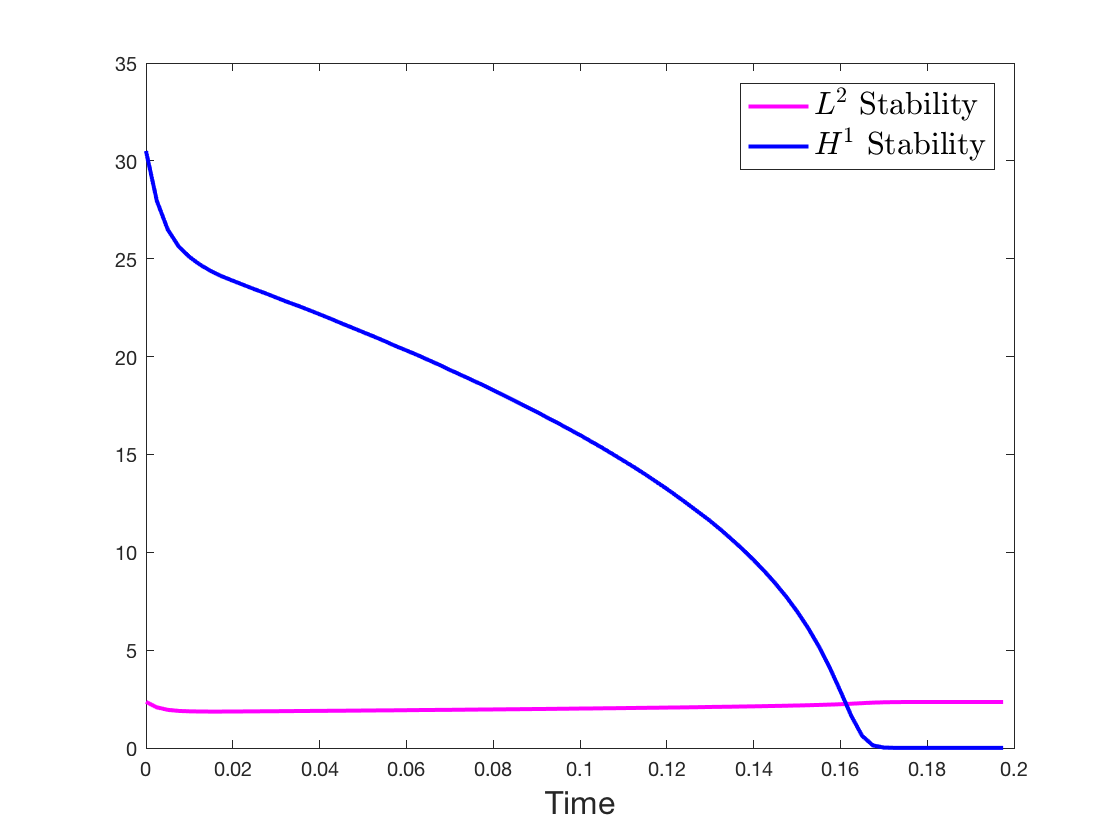}
}%
\subfloat[$\delta=1$]{
\includegraphics[width=0.40\textwidth]{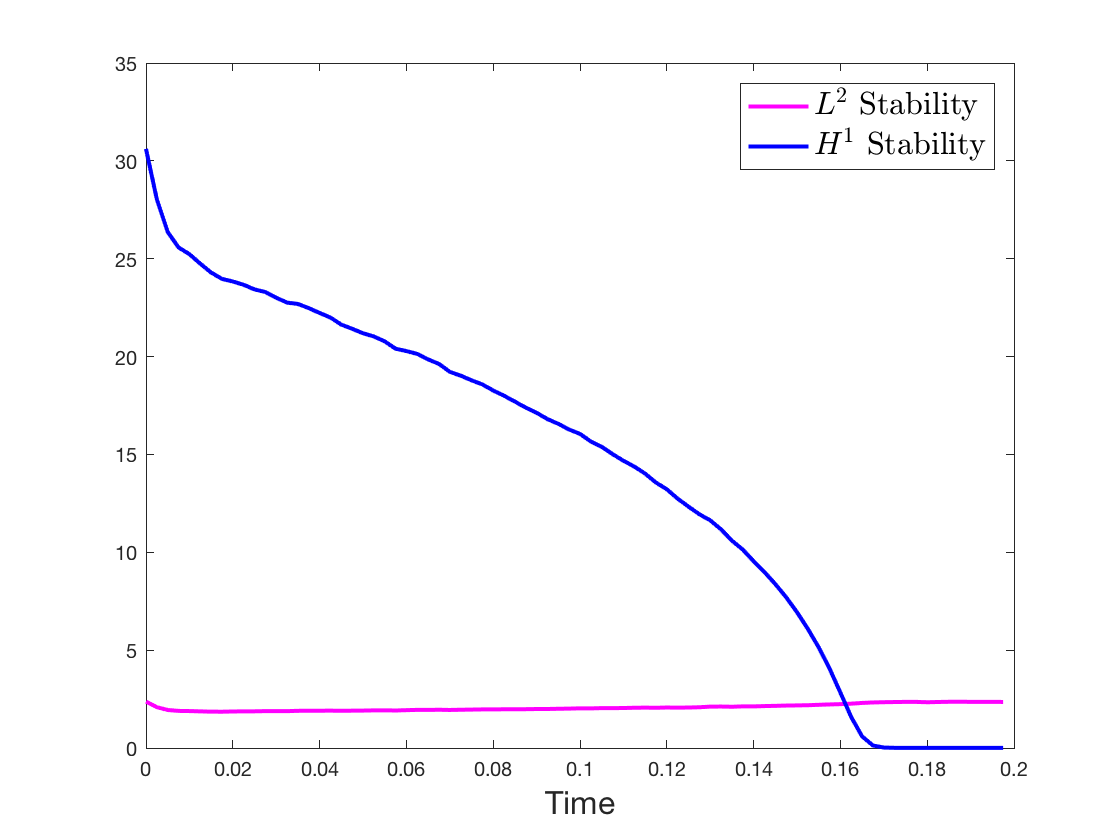}
}
\caption{Stability results: $\tau=2.5\times10^{-3}, \epsilon = 0.1$, and $h=0.04$.}\label{fig2}
\end{figure}

\smallskip
\paragraph{\bf Test 3}
Consider the following initial condition 
\begin{align}\label{test3_init}
u_0(x,y) =\tanh\Big( \frac{1}{\sqrt{2}\epsilon}(\sqrt{x^2/0.04 + y^2/0.36} - 1)(\sqrt{x^2/0.36 + y^2/0.04} - 1)\Big).
\end{align}

In this test, the nonlinear term $f(u)=u-u^3$, and the diffusion term $g(u)=\delta\,u$. Figure \ref{fig3} shows the $\E L^2$ and $\E H^1$ stability results at each time step, which verifies the results in Theorems \ref{thm20180711_1} and \ref{thm20180911}. 

\begin{figure}[!htbp]
\centering 
\captionsetup{justification=centering}
\subfloat[$\delta=0.1$]{
\includegraphics[width=0.40\textwidth]{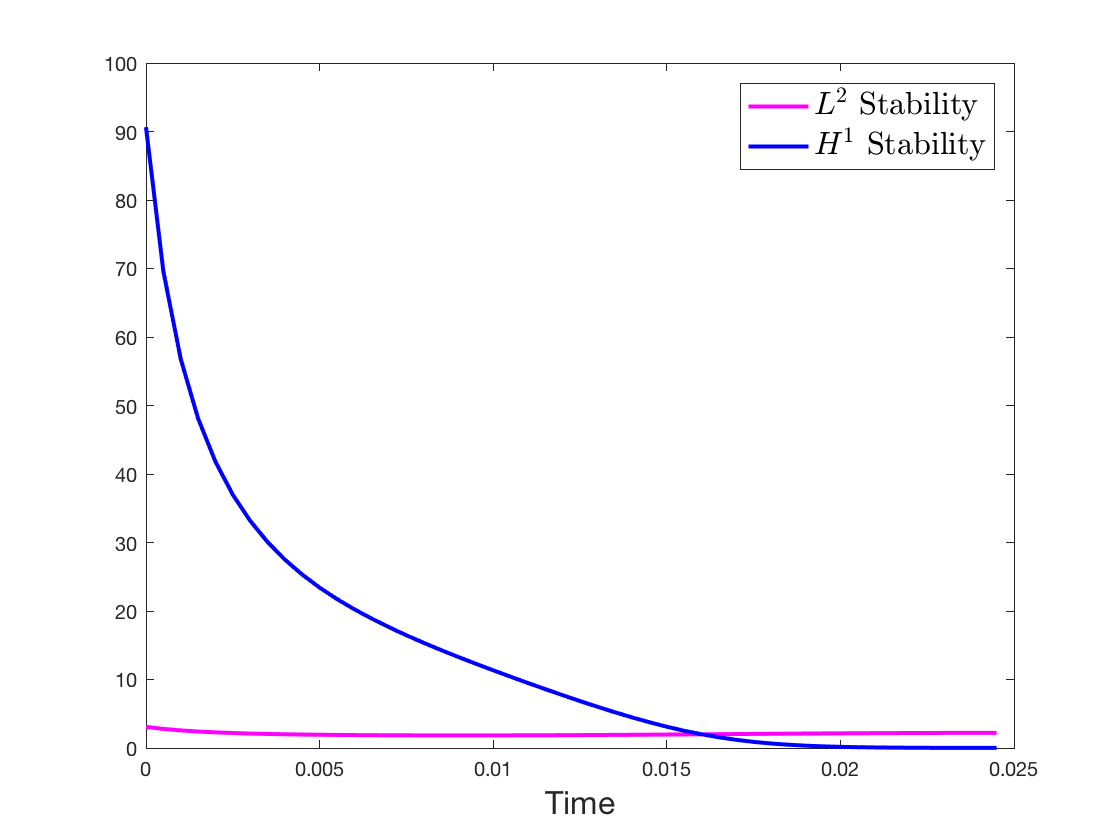}
}%
\subfloat[$\delta=1$]{
\includegraphics[width=0.40\textwidth]{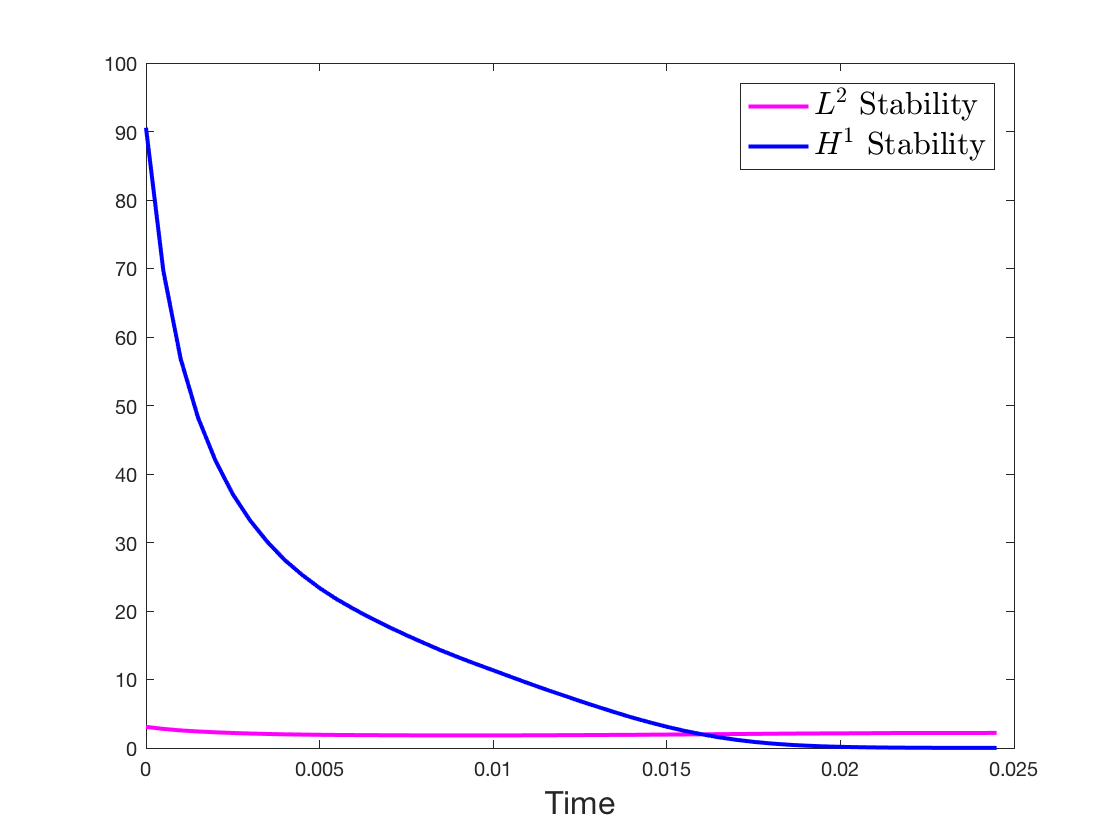}
}
\caption{Stability results: $\tau=5\times10^{-4}, \epsilon = 0.1$, and $h=0.04$.}\label{fig3}
\end{figure}

\smallskip
\paragraph{\bf Test 4}
Consider the initial condition in \eqref{test1_init} with $\epsilon=0.5$.

In this test, the nonlinear term $f(u)=u-u^{11}$, and the diffusion term $g(u)=\delta\,u$. Figure \ref{fig4} shows the $\E L^2$ and $\E H^1$ stability results at each time step, which verifies the results in Theorems \ref{thm20180711_1} and \ref{thm20180911}. 

\begin{figure}[!htbp]
\centering 
\captionsetup{justification=centering}
\subfloat[$\delta=0.1$]{
\includegraphics[width=0.40\textwidth]{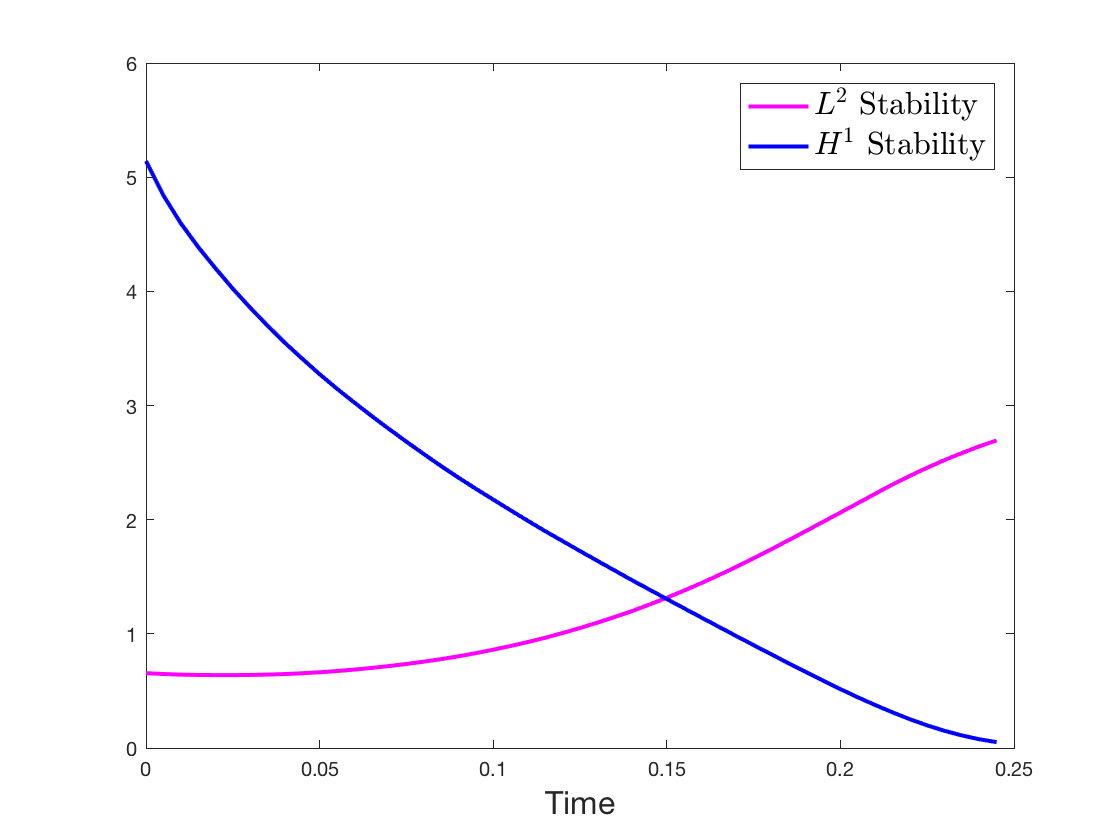}
}%
\subfloat[$\delta=1$]{
\includegraphics[width=0.40\textwidth]{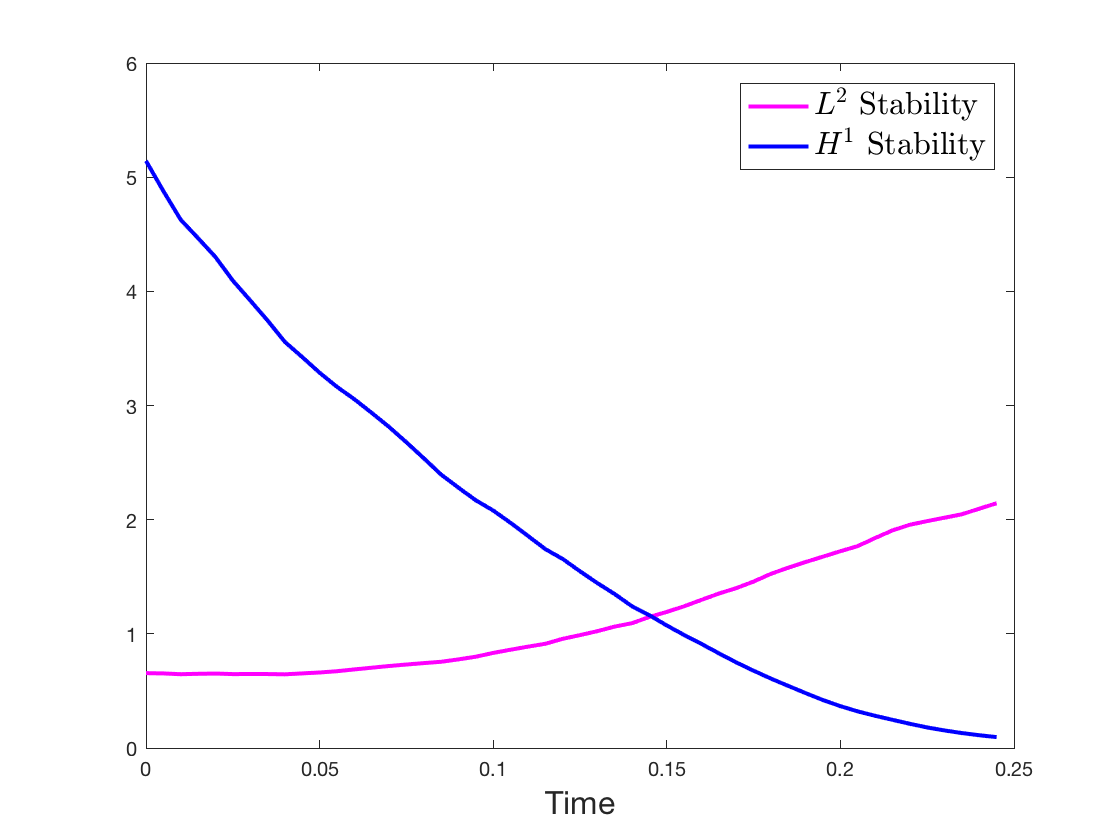}
}
\caption{Stability results: $\tau=5\times10^{-3}, \epsilon = 0.5$, and $h=0.04$.}\label{fig4}
\end{figure}

\smallskip
\paragraph{\bf Test 5}\label{test5}
Consider the initial condition in \eqref{test1_init} with $\epsilon=0.5$.

In this test, the nonlinear term $f(u)=u-u^3$, and the diffusion term $g(u)=\delta\,\sqrt{u^2+1}$. Figure \ref{fig5} shows the $\E L^2$ and $\E H^1$ stability results at each time step, which verifies the results in Theorems \ref{thm20180711_1} and \ref{thm20180911}. 

\begin{figure}[!htbp]
\centering 
\captionsetup{justification=centering}
\subfloat[$\delta=0.1$]{
\includegraphics[width=0.40\textwidth]{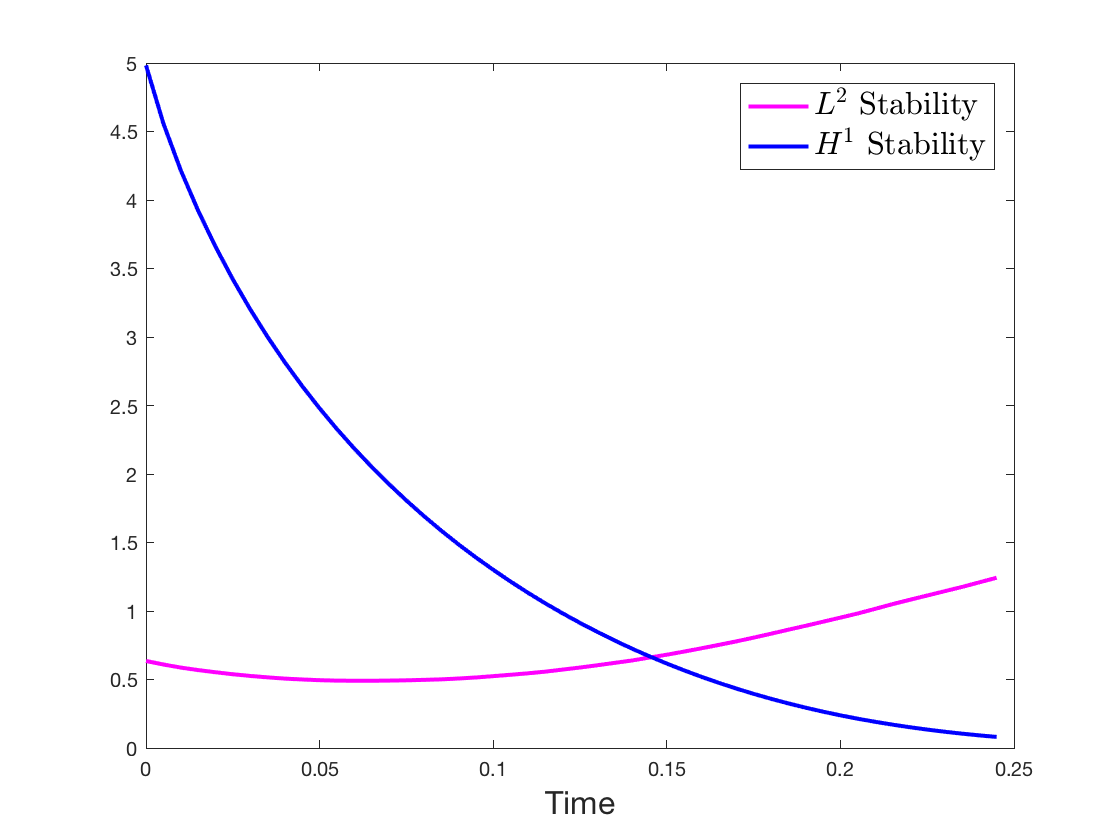}
}%
\subfloat[$\delta=1$]{
\includegraphics[width=0.40\textwidth]{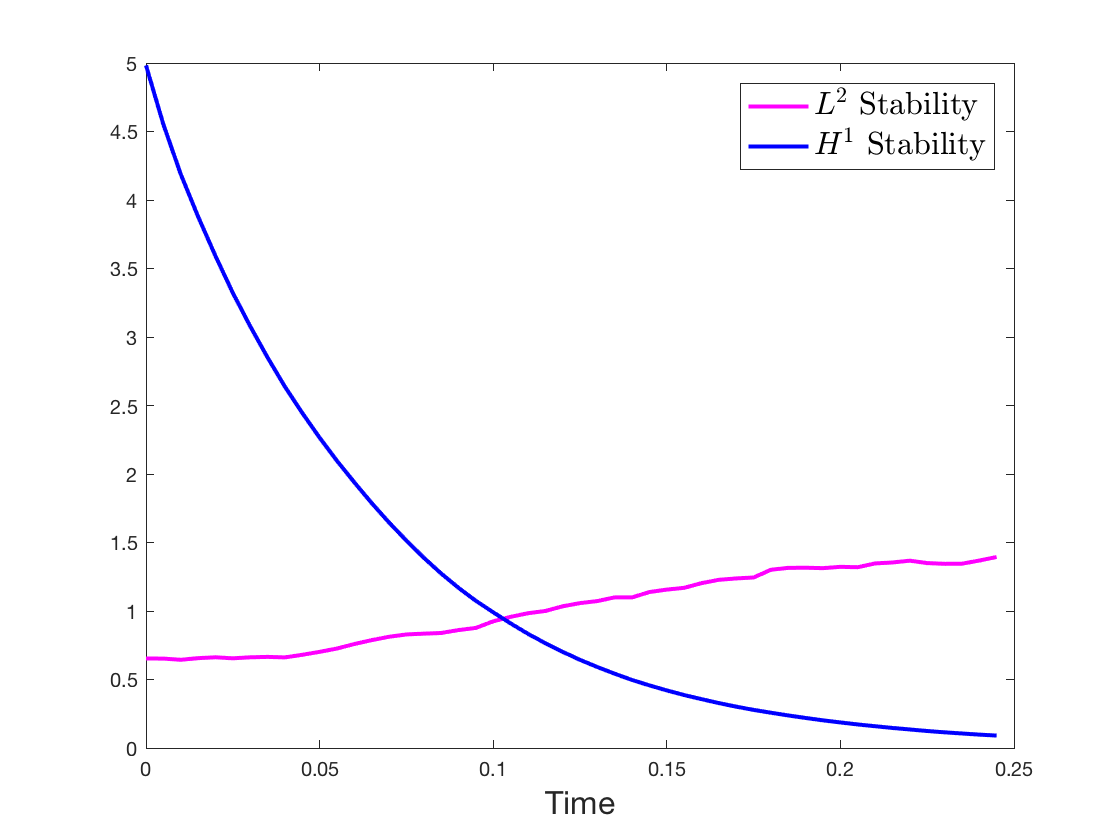}
}
\caption{Stability results: $\tau=5\times10^{-3}, \epsilon = 0.5$, and $h=0.04$.}\label{fig5}
\end{figure}


\bibliographystyle{amsplain}	

\end{document}